\newtheorem{thm}{Theorem}[section]
\theoremstyle{definition}
\newtheorem{cor}[thm]{Corollary}
\newtheorem{prop}[thm]{Proposition}
\newtheorem{defn}[thm]{Definition}
\newtheorem{lem}[thm]{Lemma}
\newtheorem{rem}[thm]{Remark}
\numberwithin{equation}{section}
\begin{document}
\title[Quasi $z^\circ$-submodules of a reduced multiplication module]
{Quasi $z^\circ$-submodules of a reduced multiplication module}
\author{F.  Farshadifar}
\address{Department of Mathematics, Farhangian University, Tehran, Iran.}
\email{f.farshadifar@cfu.ac.ir}

\subjclass[2010]{16N99, 13C13, 13C99.}%
\keywords {multiplication module, reduced module, $z^\circ$-ideal, $z^\circ$-submodule, quasi $z^\circ$-submodule}

\begin{abstract}
Let $R$ be a commutative ring with identity and $M$ be an $R$-module.
The purpose of this paper is to defined the notion of quasi $z^\circ$-submodules of $M$ as an extension of $z^\circ$-ideals of $R$ and obtained some related results when $M$ is a reduced multiplication $R$-module.
\end{abstract}
\maketitle

\section{Introduction}
\noindent
Throughout this paper, $R$ will denote a commutative ring with
identity.
An $R$-module $M$ is said to be a \emph{multiplication module} if for every submodule $N$ of $M$ there exists an ideal $I$ of $R$ such that $N=IM$ \cite{Ba81}.

A proper ideal $I$ of $R$ is called a \textit{$z$-ideal} whenever any two
elements of $R$ are contained in the same set of maximal ideals and $I$ contains
one of them, then it also contains the other one \cite{MR321915}.
For each $a \in R$, let $\mathfrak{P}_a$, be the intersection of all minimal prime ideals of $R$ containing $a$.
A proper ideal $I$ of $R$ is called a $z^\circ$-ideal if for each $a \in I$
we have $\mathfrak{P}_a \subseteq I$ \cite{MR1736781}.

For a submodule $N$ of an $R$-module $M$, let $\mathcal{M}(N)$ be the set of maximal submodules of $M$ containing
$N$ and $Max(M)$ be the set of all maximal submodules of $M$. The intersection of all maximal submodules of $M$ containing $N$ is said to be the \textit{Jacobson radical} of $N$ and denote by $Rad_N(M)$ \cite{MR3677368}. In case $N$ does not contained in
any maximal submodule, the Jacobson radical of $N$ is defined to be $M$. We denote the Jacobson radical of zero submodule of $M$ by $Rad_M(M)$. A proper submodule $N$ of $M$ is said to be a \textit{$z$-submodule} if for every $x,y \in M$, $\mathcal{M}(x)=\mathcal{M}(y)\not= \emptyset$ and $x \in N$ imply $y \in N$ \cite{F401}.

Let $M$ be an $R$-module. A proper submodule $P$ of $M$ is said to be \textit{prime} if for any
$r \in R$ and $m \in M$ with $rm \in P$, we have $m \in P$ or $r \in (P :_R M)$. In this case, $(P :_R M)$ is a prime ideal of $R$ \cite{MR183747, MR498715}.
 $M$ is said to be \textit{reduced} if the intersection of all prime submodules of $M$
is equal to zero \cite{MR2839935}.
The intersection of all prime submodules of $M$ containing a submodule $N$ of $M$ is said to be the \textit{prime
radical} of $N$ and denote by $rad_NM$. In case $N$ does not contained in
any prime submodule, the prime radical of $N$ is defined to be $M$ \cite{MR824879}.
A prime submodule $P$ of $M$
is a minimal prime submodule over $N$ if $P$ is a minimal element of the set of all
prime submodules of $M$ that contain $N$. \textit{A minimal prime submodule} of $M$ means a minimal prime submodule over the $0$ submodule of $M$.
The set of all minimal prime submodules of $M$ will be denoted by $Min(M)$. The intersection of all minimal prime submodules of $M$ containing a submodule $K$ of $M$ is denote by $\mathfrak{P}_K$. If $N$ is a submodule of $M$, define
 $V (N) = \{P \in Min(M) : N \subseteq P\}$.

In \cite{F402}, the notion of $z^\circ$-submodules of an $R$-module $M$ as an extension of $z^\circ$-ideals was introduced and some of their properties when $M$ is a reduced multiplication $R$-module deals. A proper submodule $N$ of an $R$-module $M$ is said to be a \textit{$z^\circ$-submodule} of $M$ if  $\mathfrak{P}_x \subseteq N$  for all  $x \in N$ \cite{F402}.
In this paper, we introduced the notion of quasi $z^\circ$-submodules of an $R$-module $M$ as another generalization of $z^\circ$-ideals. Also, we investagate some related results when $M$ is a reduced multiplication $R$-module.
\section{Reduced multiplication modules}
\begin{lem}\label{ll1.4}
Let $M$ be a reduced multiplication $R$-module and $P$ be a minimal prime submodule of $M$.
If $a \in (P:_RM)$, then
$$
Ann_{R/Ann_R(M)}(a+Ann_R(M)) \not \subseteq  (P:_RM)/Ann_R(M).
$$
\end{lem}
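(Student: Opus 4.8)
The plan is to pass to $\bar R := R/Ann_R(M)$, over which $M$ becomes faithful, and then reduce the statement to the standard description of minimal primes of a reduced ring. Write $\bar a = a + Ann_R(M)$ and $\mathfrak p = (P :_R M)/Ann_R(M)$. First I would record the routine facts: $M$ is still a multiplication module over $\bar R$; one has $\mathfrak p = (P :_{\bar R} M)$, which is a prime ideal of $\bar R$ by the remark following the definition of a prime submodule; and $Ann_{R/Ann_R(M)}(a+Ann_R(M)) = Ann_{\bar R}(\bar a)$. The point worth checking is that $\bar R$ is a \emph{reduced} ring: if $\bar r^{\,2} = 0$ in $\bar R$, then $r^2 M = 0$, so for every prime submodule $Q$ of $M$ the relation $r(rM) = 0 \subseteq Q$ forces $rM \subseteq Q$ (regardless of whether $r \in (Q:_R M)$); since $M$ is reduced, $rM = 0$, i.e.\ $\bar r = 0$. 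After this reduction it suffices to prove $Ann_{\bar R}(\bar a) \not\subseteq \mathfrak p$, with $M$ now a faithful reduced multiplication $\bar R$-module.

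The heart of the matter is to show $\mathfrak p \in Min(\bar R)$. Since $M$ is a multiplication module, $P = (P:_{\bar R}M)M = \mathfrak p M$, and $\mathfrak p M = P \neq M$. Let $\mathfrak q$ be any prime ideal of $\bar R$ with $\mathfrak q \subseteq \mathfrak p$. Then $\mathfrak q M \subseteq \mathfrak p M = P \neq M$, so $\mathfrak q M$ is a proper submodule of $M$; being of the form $\mathfrak q M$ with $\mathfrak q$ prime, it is a prime submodule of $M$ (for a multiplication module, $\mathfrak q M$ is prime whenever it is proper and $\mathfrak q \supseteq Ann(M)$, which is automatic over $\bar R$). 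As $\mathfrak q M \subseteq P$ and $P$ is a minimal prime submodule, $\mathfrak q M = P = \mathfrak p M$; and since a faithful multiplication module is a cancellation module, $\mathfrak q = \mathfrak p$. Hence no prime ideal of $\bar R$ lies properly inside $\mathfrak p$, i.e.\ $\mathfrak p$ is a minimal prime. (Equivalently, one may quote the known order-isomorphism between the prime submodules of a faithful multiplication module $M$ and the prime ideals $\mathfrak q$ of $\bar R$ with $\mathfrak q M \neq M$, under which $Min(M)$ corresponds to $Min(\bar R)$.)

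To conclude, use that $\bar R$ is reduced and $\mathfrak p \in Min(\bar R)$: the localization $\bar R_{\mathfrak p}$ is then a field, so from $\bar a \in \mathfrak p$ we get $\bar a / 1 = 0$ in $\bar R_{\mathfrak p}$, whence there is $\bar s \in \bar R \setminus \mathfrak p$ with $\bar s\,\bar a = 0$. This $\bar s$ lies in $Ann_{\bar R}(\bar a)$ but not in $\mathfrak p = (P :_{\bar R} M)$, which is exactly the assertion $Ann_{R/Ann_R(M)}(a+Ann_R(M)) \not\subseteq (P:_R M)/Ann_R(M)$.

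The main obstacle is the middle step: transferring minimality from the prime submodule $P$ to the ideal $(P:_{\bar R}M)$. Everything there rests on standard properties of multiplication modules — that $N = (N:_R M)M$ for every submodule $N$, that a proper $\mathfrak q M$ with $\mathfrak q$ prime is a prime submodule, and the cancellation property of faithful multiplication modules. If these are taken as known the proof is short; making them explicit, or instead invoking a ready-made bijection $Min(M) \cong Min(R/Ann_R(M))$ for reduced multiplication modules, is where the real content sits.
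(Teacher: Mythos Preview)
Your proposal is correct and follows exactly the paper's three-step strategy: show $\bar R=R/Ann_R(M)$ is reduced, show $(P:_RM)/Ann_R(M)$ is a minimal prime of $\bar R$, then use that in a reduced ring an element of a minimal prime has annihilator not contained in that prime; the paper simply cites these three facts from Samei, Koohy, and Matlis respectively, whereas you argue them out directly. The only point to tighten is the blanket assertion that a faithful multiplication module is a cancellation module---this typically needs $M$ finitely generated (Smith's Theorem~10)---so rest the minimality step on the order-preserving bijection $P\leftrightarrow(P:_RM)$ between prime submodules of $M$ and prime ideals $\mathfrak q\supseteq Ann_R(M)$ with $\mathfrak qM\neq M$ that you already name as the equivalent route, which is precisely what the paper's Koohy citation supplies.
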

\begin{proof}
By \cite[Proposition 1.5]{MR2378535}, we have $(P:_RM)$ is a minimal prime ideal of $R$ over $Ann_R(M)$.
As $M$ is a reduced multiplication $R$-module, $R/Ann_R(M)$ is a reduced ring by \cite{MR2839935}.
Let  $a+Ann_R(M) \in (P:_RM)/Ann_R(M)$.
 Then by \cite[Proposition 1.2 (1)]{MR698302}, we have $Ann_{R/Ann_R(M)}(a+Ann_R(M)) \not \subseteq  (P:_RM)/Ann_R(M)$.
\end{proof}

\begin{rem}\label{rr1.11}\cite[Remark ]{F402}
Let $M$ be a multiplication $R$-module and $\Omega$ be a subset of $Min(M)$. Set $\mathfrak{P}_{\Omega}=\cap \{P: P \in \Omega\}$.
A subset $\Omega $ of $Min(M)$ is said to be closed if $\Omega=V(\mathfrak{P}_{\Omega})$. With this notion of
closed set, one can see that the space of minimal prime submodules of $M$ becomes a topological space.
\end{rem}

\begin{thm}\label{tt1.11}
Let $M$ be a faithful reduced multiplication $R$-module. Then for each $a \in R$,  we have $V(Ann_R(a)M)=Min(M) \setminus V(aM)$. In particular, $V(Ann_R(a)M)$ and $V(aM)$ are disjoint open-and-closed sets.
\end{thm}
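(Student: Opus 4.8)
The plan is to transfer the statement to the ring of scalars using the standard dictionary for multiplication modules, and then read off the topological consequence from Remark~\ref{rr1.11}. Fix $a\in R$ and $P\in Min(M)$, and set $\mathfrak p:=(P:_RM)$. Since $M$ is faithful, $Ann_R(M)=0$, so (exactly as in the proof of Lemma~\ref{ll1.4}) $\mathfrak p$ is a minimal prime ideal of $R$; moreover, directly from the definition of the colon, $IM\subseteq P$ if and only if $I\subseteq\mathfrak p$ for every ideal $I$ of $R$. In particular $aM\subseteq P$ iff $a\in\mathfrak p$, and $Ann_R(a)M\subseteq P$ iff $Ann_R(a)\subseteq\mathfrak p$. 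Hence the desired identity $V(Ann_R(a)M)=Min(M)\setminus V(aM)$ is equivalent to showing that, for each $P\in Min(M)$, exactly one of the conditions $a\in\mathfrak p$ and $Ann_R(a)\subseteq\mathfrak p$ holds.

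I would establish this dichotomy in two short steps. First, if $a\notin\mathfrak p$, then for every $r\in Ann_R(a)$ we have $ra=0\in\mathfrak p$, so primeness of $\mathfrak p$ gives $r\in\mathfrak p$; thus $Ann_R(a)\subseteq\mathfrak p$. Second, if $a\in\mathfrak p=(P:_RM)$, then Lemma~\ref{ll1.4} (applied with $Ann_R(M)=0$) yields $Ann_R(a)\not\subseteq\mathfrak p$. Consequently the two conditions are mutually exclusive and jointly exhaustive, which is exactly the asserted equality; in particular $V(Ann_R(a)M)$ and $V(aM)$ are disjoint.

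For the last sentence I would check that, in the topology of Remark~\ref{rr1.11}, $V(N)$ is a closed set for every submodule $N$ of $M$: with $\Omega=V(N)$ one has $\mathfrak P_\Omega=\mathfrak P_N$, and since $N\subseteq\mathfrak P_N$ while $\mathfrak P_N\subseteq P$ for every $P\in V(N)$, it follows that $V(N)=V(\mathfrak P_N)=V(\mathfrak P_\Omega)$, i.e.\ $\Omega$ is closed. Applying this to $N=aM$ and to $N=Ann_R(a)M$ makes both $V(aM)$ and $V(Ann_R(a)M)$ closed, and since each is the complement of the other in $Min(M)$ by the first part, each is also open. The only point requiring care is the bookkeeping around $Ann_R(M)$---ensuring Lemma~\ref{ll1.4} is invoked in the faithful setting so that passing to the quotient by $Ann_R(M)$ is harmless---together with the routine verification that $V(N)=V(\mathfrak P_N)$; everything else is formal.
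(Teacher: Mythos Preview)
Your argument is correct and follows essentially the same route as the paper: both proofs use Lemma~\ref{ll1.4} (with $Ann_R(M)=0$) to show that $a\in(P:_RM)$ forces $Ann_R(a)\not\subseteq(P:_RM)$, and both use primeness of $P$ (equivalently of $\mathfrak p$) together with $ab=0$ for $b\in Ann_R(a)$ to handle the complementary case. The only difference is cosmetic---you translate everything to the ring side via $\mathfrak p=(P:_RM)$ and spell out why $V(N)=V(\mathfrak P_N)$ is closed, whereas the paper works directly with $P$ and simply asserts closedness of $V$-sets.
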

\begin{proof}
If $P\in V(aM)$, then by Lemma \ref{ll1.4}, $Ann_R(a) \not\subseteq (P:_RM)$ and so $Ann_R(a)M  \not\subseteq P$. Thus $V(Ann_R(a)M)\cap V(aM)=\emptyset$. On the other
hand, if $P\in Min(M) \setminus V(aM)$, then for any $b \in Ann_R(a)$, we have $abM=0 \subseteq P$. Since
$a \not \in (P:_RM)$ and $P$ is prime, $bM \subseteq P$. Therefore, $P \in V(Ann_R(a)M)$.  Thus $V(Ann_R(a)M)=Min(M) \setminus V(aM)$. Both sets $V(Ann_R(a)M)$ and $V(aM)$ are closed, and
since they are complementary, they are also open.
\end{proof}

\begin{cor}\label{cc1.11}
Let $M$ be a faithful reduced multiplication $R$-module.  Then
 $Min(M)$ is a Hausdorff space with a base of open-and-closed sets.
\end{cor}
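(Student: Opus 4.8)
The plan is to deduce the corollary directly from Theorem~\ref{tt1.11}. For each $a\in R$ set $U_a := Min(M)\setminus V(aM)$; by Theorem~\ref{tt1.11} we have $U_a=V(Ann_R(a)M)$, so every $U_a$ is both open and closed in the topology of Remark~\ref{rr1.11}. It therefore suffices to establish two things: that the family $\{U_a : a\in R\}$ is a base for this topology, and that any two distinct points of $Min(M)$ can be separated by disjoint open sets.

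\emph{Base of open-and-closed sets.} By Remark~\ref{rr1.11}, every closed subset of $Min(M)$ is $V(\mathfrak{P}_\Omega)$ for a closed $\Omega$, and $\mathfrak{P}_\Omega$ is a submodule of $M$; hence every open set has the form $Min(M)\setminus V(N)$ for some submodule $N$. Here I would use that $M$ is a multiplication module: then $N=(N:_R M)M$, and for $P\in Min(M)$ one has $N\subseteq P$ if and only if $(N:_R M)\subseteq(P:_R M)$ (forward, since $(N:_RM)M=N\subseteq P$ gives $(N:_RM)\subseteq(P:_RM)$; backward, since $(N:_RM)\subseteq(P:_RM)$ gives $N=(N:_RM)M\subseteq(P:_RM)M\subseteq P$). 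As $a\in(P:_RM)\iff P\in V(aM)$, this yields
\[
V(N)=\bigcap_{a\in(N:_RM)}V(aM),\qquad\text{so}\qquad Min(M)\setminus V(N)=\bigcup_{a\in(N:_RM)}U_a .
\]
Thus every open set is a union of members of $\{U_a\}$, and $\{U_a:a\in R\}$ is a base of open-and-closed sets.

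\emph{Hausdorffness.} Let $P,Q\in Min(M)$ with $P\neq Q$. Distinct minimal prime submodules are incomparable, so $P\not\subseteq Q$; since $P=(P:_RM)M$ and $Q=(Q:_RM)M$, this forces $(P:_RM)\not\subseteq(Q:_RM)$, and we may pick $a\in(P:_RM)\setminus(Q:_RM)$. Then $aM\subseteq P$ and $aM\not\subseteq Q$, i.e. $P\in V(aM)$ and $Q\in U_a$. By Theorem~\ref{tt1.11} the sets $U_a$ and $V(aM)$ are disjoint and both open, and they contain $Q$ and $P$ respectively, so $Min(M)$ is Hausdorff. Together with the previous paragraph this proves the corollary.

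The only genuine work, I expect, is in the second paragraph: unwinding Remark~\ref{rr1.11} to see that open sets are exactly the complements of $V(N)$ with $N$ a submodule, and then pushing the equivalence $N\subseteq P\iff(N:_RM)\subseteq(P:_RM)$ through the multiplication hypothesis to obtain $V(N)=\bigcap_{a\in(N:_RM)}V(aM)$ — this is the point where the correspondence between submodules of $M$ and ideals of $R$ is used, and the only place where ``multiplication'' is needed beyond what Theorem~\ref{tt1.11} already absorbs. Everything else (the clopenness of each $U_a$ and the separation of distinct minimal primes) is then an immediate application of Theorem~\ref{tt1.11}.
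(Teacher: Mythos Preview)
Your proof is correct and follows essentially the same approach as the paper: separate two distinct minimal primes $P\neq Q$ by choosing $a\in(P:_RM)\setminus(Q:_RM)$ and using Theorem~\ref{tt1.11} to get the disjoint clopen sets $V(aM)\ni P$ and $V(Ann_R(a)M)\ni Q$, and observe that the $V(aM)$ form a base for the closed sets so their complements $U_a=V(Ann_R(a)M)$ form a clopen base for the open sets. Your write-up simply fills in details the paper leaves implicit---the incomparability of minimal primes to justify the choice of $a$, and the identity $V(N)=\bigcap_{a\in(N:_RM)}V(aM)$ (via the multiplication hypothesis) to verify the base claim---but the skeleton is identical.
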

\begin{proof}
Let $P \not= \acute{P} \in Min(M)$. Assume that $a \in (P:_RM)\setminus (\acute{P}:_RM)$. Then $V(aM)$ and $V(Ann_R(a)M)$ are
disjoint open sets containing $P $ and $\acute{P}$, respectively. Hence $Min(M)$ is a Hausdorff
space. In fact, the family $\{V(aM)\}$ is a base for the closed
sets. Thus $V(Ann_R(a)M)$ is a base for the open sets.
\end{proof}

\begin{thm}\label{t1.5}
Let $M$ be a reduced multiplication $R$-module. Then we have the following.
\begin{itemize}
\item [(a)] If $M$ is a faithful $R$-module, then $V(aM)=V((0:_MAnn_R(aM)))$ for each  $a \in R$.
\item [(b)] $(0:_MAnn_R(IJM))=(0:_MAnn_R(IM)) \cap (0:_MAnn_R(JM))$ for each ideals $I, J$ of $R$.
\end{itemize}
\end{thm}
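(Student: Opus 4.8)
The plan for (a) is to check the two inclusions between the sets $V(aM)$ and $V((0:_MAnn_R(aM)))$ directly. One inclusion needs no hypothesis: since every element of $Ann_R(aM)$ kills $aM$, we always have $aM\subseteq(0:_MAnn_R(aM))$, so any minimal prime submodule containing $(0:_MAnn_R(aM))$ contains $aM$, i.e. $V((0:_MAnn_R(aM)))\subseteq V(aM)$. For the reverse inclusion I would take $P\in V(aM)$, so $a\in(P:_RM)$, recall (as in the proof of Lemma \ref{ll1.4}) that $(P:_RM)$ is a minimal prime of $R$ over $Ann_R(M)$, and use faithfulness to identify $Ann_R(aM)=Ann_R(a)$. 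Lemma \ref{ll1.4} then yields $Ann_R(aM)\not\subseteq(P:_RM)$, so pick $b\in Ann_R(aM)\setminus(P:_RM)$. For any $x\in(0:_MAnn_R(aM))$ we get $bx=0\in P$ with $b\notin(P:_RM)$, and primeness of $P$ forces $x\in P$; hence $(0:_MAnn_R(aM))\subseteq P$ and $P\in V((0:_MAnn_R(aM)))$.

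For (b), the inclusion $\subseteq$ is free from the monotonicity of $N\mapsto(0:_MAnn_R(N))$ together with $Ann_R(IM),Ann_R(JM)\subseteq Ann_R(IJM)$. For $\supseteq$ I would first reduce to the faithful case: replacing $R$ by $\overline R=R/Ann_R(M)$, which is reduced (as used in the proof of Lemma \ref{ll1.4}), changes neither the submodules $IM,JM,IJM$ nor the ideals annihilating them, and makes $M$ a faithful reduced multiplication $\overline R$-module. So assume $M$ is faithful. Take $m\in(0:_MAnn_R(IM))\cap(0:_MAnn_R(JM))$ and $r\in Ann_R(IJM)$; the goal is $rm=0$. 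Rewriting $rIJM=0$ gives $rJ\subseteq Ann_R(IM)$ and $rI\subseteq Ann_R(JM)$, whence $rJm=0$ and $rIm=0$, so $(I+J)(rm)=0$; moreover $Ann_R(IM)(rm)=r\big(Ann_R(IM)m\big)=0$. Thus $rm\in(0:_M I)\cap(0:_MAnn_R(IM))=(0:_M(I+Ann_R(IM)))$, and everything reduces to showing that this submodule is zero.

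To prove $(0:_M(I+Ann_R(IM)))=0$, set $L=(0:_M(I+Ann_R(IM)))$ and use that $M$ is a multiplication module to write $L=(L:_RM)M$. From $IL=0$ and faithfulness, $I\,(L:_RM)M=0$ forces $(L:_RM)\subseteq Ann_R(I)=Ann_R(IM)$; likewise $Ann_R(IM)\,L=0$ forces $(L:_RM)\subseteq Ann_R(Ann_R(IM))$. Hence any $s\in(L:_RM)$ lies in $Ann_R(IM)$ and also annihilates $Ann_R(IM)$, so $s^2=0$, and since $R$ is reduced $s=0$; therefore $(L:_RM)=0$ and $L=0$. This gives $rm=0$, so $m\in(0:_MAnn_R(IJM))$, completing (b). The step I expect to require the most care is precisely this last one: recognizing that the module identity $L=0$ should be attacked through its colon ideal $(L:_RM)$ — which is exactly where ``multiplication'' and the reduction to ``faithful'' are essential — after which reducedness finishes via the elementary observation that $Ann_R(IM)\cap Ann_R(Ann_R(IM))=0$ in a reduced ring. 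The other point to be careful about is the bookkeeping with the products $rI$, $rJ$ that shows $rm$ genuinely lands in $(0:_M(I+Ann_R(IM)))$.
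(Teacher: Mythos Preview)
Your proof of (a) is identical to the paper's: both inclusions are handled exactly as you describe, with Lemma~\ref{ll1.4} supplying the element $b\in Ann_R(aM)\setminus(P:_RM)$ for the nontrivial direction.

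For (b) your argument is correct but organized differently from the paper's. The paper does not pass to $R/Ann_R(M)$; instead, with $z$ and $t$ playing the roles of your $m$ and $r$, it uses the multiplication hypothesis to write the cyclic submodule $Rtz=AM$, observes from $tJz=0$ that $A\subseteq Ann_R(JM)$ and hence $Az=0$, and from this computes directly that $(Rtz)^2=0$, which forces $tz=0$ since $M$ is a reduced multiplication module. You instead isolate a fixed submodule $L=(0:_M(I+Ann_R(IM)))$, place $rm$ inside it, and kill $L$ once and for all by showing $(L:_RM)\subseteq Ann_R(IM)\cap Ann_R(Ann_R(IM))=0$ in the reduced ring $\overline R$. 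Both routes rest on the same mechanism---use the multiplication structure to translate a module vanishing into an ideal statement, then invoke reducedness to kill a square-zero element---but yours cleanly separates the structural fact $L=0$ (independent of $r,m$) from the element-chasing, and the explicit reduction to the faithful case makes the role of that hypothesis transparent; the paper's version is more compact but the passage from $Az=0$ to $(Rtz)^2=0$ is terser. One small remark: your computation of $rJm=0$ is correct but unused, since $L$ involves only $I$ and $Ann_R(IM)$.
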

\begin{proof}
(a)  Let $M$ be a faithful $R$-module and $a \in R$.
As $aM\subseteq (0:_MAnn_R(aM))$, we have $V((0:_MAnn_R(aM)))\subseteq V(aM)$. Now let
$P$ be a minimal prime submodule of $M$ containing $aM$. Then there exists
 $b \in Ann_R(aM) \setminus (P:_RM)$ by Lemma \ref{ll1.4}. Then, for any $y \in (0:_MAnn_R(aM))$, we have $by=0 \in P$. Hence $y \in P$. So, $(0:_MAnn_R(aM))\subseteq P$, as needed.

(b) Let $I, J$ be ideals of $R$. As $Ann_R(IM) \subseteq Ann_R(IJM)$ and $Ann_R(JM) \subseteq Ann_R(IJM)$, we have
$$
 (0:_MAnn_R(IJM))\subseteq (0:_MAnn_R(JM)) \cap (0:_MAnn_R(IM)).
$$
Now suppose that $z \in (0:_MAnn_R(JM)) \cap (0:_MAnn_R(IM))$. Let $t \in Ann_R(IJM)$. Then $Jt \subseteq Ann_R(IM)$ and so $tJz=0$. Since $M$ is a multiplication module, $tzR=AM$ for some ideal $A$ of $R$. Hence, $A \subseteq Ann_R(JM)$ and so $Az=0$. Thus $t(Rz)^2=0$. It follows that $(Rtz)^2=0$. This implies that $tz=0$ since $M$ is a reduced multiplication module. Hence, $zAnn_R(IJM)=0$. It follows that $z \in (0:_MAnn_R(IJM))$.
\end{proof}

\begin{cor}\label{c1.13}
Let $M$ be a faithful reduced multiplication $R$-module. Then for each $a \in R$, $(0:_MAnn_R(aM))=\mathfrak{P}_{aM}$.
\end{cor}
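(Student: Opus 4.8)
The plan is to read this off Theorem \ref{t1.5}(a) together with the fact that $M$ is reduced. By Theorem \ref{t1.5}(a) we have $V(aM)=V\big((0:_MAnn_R(aM))\big)$, and intersecting over these (identical) sets of minimal prime submodules gives $\mathfrak{P}_{aM}=\mathfrak{P}_{(0:_MAnn_R(aM))}$. Writing $N:=(0:_MAnn_R(aM))$, the inclusion $N\subseteq \mathfrak{P}_N=\mathfrak{P}_{aM}$ is automatic, since $N$ is contained in every minimal prime submodule lying over it. So the whole task reduces to proving $\mathfrak{P}_{aM}\subseteq N$.

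To prove $\mathfrak{P}_{aM}\subseteq (0:_MAnn_R(aM))$, fix $x\in \mathfrak{P}_{aM}$ and $b\in Ann_R(aM)$; I must show $bx=0$. Since $M$ is reduced, the intersection of all minimal prime submodules of $M$ is $0$, so it is enough to check that $bx\in P$ for every $P\in Min(M)$. If $aM\subseteq P$, then $P\in V(aM)$, hence $x\in \mathfrak{P}_{aM}\subseteq P$ and therefore $bx\in P$. If $aM\not\subseteq P$, then $a\notin (P:_RM)$; from $abM=0\subseteq P$ and the primeness of $P$ we get $bM\subseteq P$, so again $bx\in bM\subseteq P$. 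Hence $bx$ lies in every minimal prime submodule of $M$, so $bx=0$. As $b\in Ann_R(aM)$ was arbitrary, $Ann_R(aM)x=0$, i.e. $x\in (0:_MAnn_R(aM))$, which is what was needed.

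Alternatively one may avoid Theorem \ref{t1.5}(a) for the inclusion $N\subseteq \mathfrak{P}_{aM}$ and argue directly as in the proof of that theorem: for $P\in V(aM)$, Lemma \ref{ll1.4} supplies an element $b\in Ann_R(aM)\setminus (P:_RM)$, and then any $y\in (0:_MAnn_R(aM))$ satisfies $by=0\in P$, forcing $y\in P$ by primeness; thus $N\subseteq P$ for all $P\in V(aM)$, whence $N\subseteq \mathfrak{P}_{aM}$.

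The one point that deserves care is the structural fact invoked in the second paragraph, namely that in a reduced multiplication module the intersection of the minimal prime submodules is zero. This follows from $M$ being reduced (the intersection of \emph{all} prime submodules is $0$) once one knows that every prime submodule of $M$ contains a minimal prime submodule — which one obtains from a Zorn's-lemma argument on descending chains of prime submodules (the intersection of a chain of prime submodules is again prime), or via the prime-correspondence with $R/Ann_R(M)$. Apart from this, the proof is just the two-case verification above plus one appeal to Lemma \ref{ll1.4}.
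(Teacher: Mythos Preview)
Your argument is correct. Both your proof and the paper's start from Theorem~\ref{t1.5}(a) to obtain $V(aM)=V\big((0:_MAnn_R(aM))\big)$ and hence $\mathfrak{P}_{aM}=\mathfrak{P}_{(0:_MAnn_R(aM))}$, so the only issue is why $(0:_MAnn_R(aM))$ coincides with its own minimal-prime closure. The paper handles this by quoting an external result (\cite[Theorem 2.11]{F402}), which asserts directly that $(0:_MI)=\mathfrak{P}_{(0:_MI)}$ for every ideal $I$; your proof instead establishes the needed inclusion $\mathfrak{P}_{aM}\subseteq (0:_MAnn_R(aM))$ from scratch via the two-case verification that $bx$ lies in every minimal prime submodule, together with the fact that $\bigcap_{P\in Min(M)}P=0$ in a reduced module. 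Your route is thus more self-contained (it does not depend on the unpublished reference~\cite{F402}) and makes the role of reducedness explicit, while the paper's version is shorter given the citation. The Zorn argument you flag for the existence of minimal prime submodules below any prime is indeed the one subtlety, and your justification is sound.
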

\begin{proof}
Since by \cite[Theorem 2.11]{F402}, $(0:_MI)=\mathfrak{P}_{(0:_MI)}$ for each ideal $I$ of $R$. The result follows from Theorem \ref{t1.5} (a).
\end{proof}

\begin{thm}\label{t4.6}
Let $M$ be a reduced multiplication $R$-module. Then the following are equivalent:
\begin{itemize}
\item [(a)] For $a, b\in R$, $Ann_R(aM)=Ann_R(bM)$ and $aM \subseteq N$ imply that $bM \subseteq N$;
\item [(b)] For $a, b\in R$, $Ann_R(aM)\subseteq Ann_R(bM)$ and $aM \subseteq N$ imply that $bM \subseteq N$.
\end{itemize}
\end{thm}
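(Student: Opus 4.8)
The plan is to establish $(b)\Rightarrow(a)$ trivially and to obtain $(b)$ from $(a)$ by a multiplicative trick. Indeed, if $(b)$ holds and $Ann_R(aM)=Ann_R(bM)$ with $aM\subseteq N$, then in particular $Ann_R(aM)\subseteq Ann_R(bM)$, so $(b)$ yields $bM\subseteq N$; hence $(b)\Rightarrow(a)$. For the converse, assume $(a)$ and suppose $Ann_R(aM)\subseteq Ann_R(bM)$ and $aM\subseteq N$; the goal is $bM\subseteq N$. The idea is to replace the pair $(a,b)$ by the pair $(ab,b)$: since $abM\subseteq aM\subseteq N$, once we know $Ann_R(abM)=Ann_R(bM)$ we may apply $(a)$ to the pair $(ab,b)$ and conclude $bM\subseteq N$.

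Thus everything reduces to the claim $Ann_R(abM)=Ann_R(bM)$. The inclusion $Ann_R(bM)\subseteq Ann_R(abM)$ is immediate, since $r(bM)=0$ forces $r(abM)=a(rbM)=0$. For the reverse inclusion, let $r\in Ann_R(abM)$; we must show $rbM=0$. Put $L:=rbM$, a submodule of $M$ with $aL=0$ (because $r\,abM=0$). I will show $L\subseteq P$ for every minimal prime submodule $P$ of $M$; since $M$ is reduced, the intersection of its prime submodules is $0$, so this forces $L=0$.

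Fix $P\in Min(M)$. Since $aL\subseteq 0\subseteq P$ and $P$ is prime, either $L\subseteq P$ (and we are done) or $a\in(P:_RM)$. In the latter case Lemma \ref{ll1.4} provides $s\in R$ with $s\notin(P:_RM)$ and $saM=0$, i.e. $s\in Ann_R(aM)$. By hypothesis $Ann_R(aM)\subseteq Ann_R(bM)$, so $sbM=0\subseteq P$; as $P$ is prime and $s\notin(P:_RM)$, this forces $bM\subseteq P$, whence $L=rbM\subseteq bM\subseteq P$. So $L\subseteq P$ in every case, hence $L\subseteq\bigcap_{P\in Min(M)}P=0$. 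This proves $Ann_R(abM)=Ann_R(bM)$ and finishes the argument.

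The only genuine obstacle is the inclusion $Ann_R(abM)\subseteq Ann_R(bM)$; the rest ($(b)\Rightarrow(a)$, the reduction to the pair $(ab,b)$, and the easy inclusion of annihilators) is routine. For that inclusion the essential inputs are the primeness of a minimal prime submodule $P$ together with that of $(P:_RM)$, Lemma \ref{ll1.4} (which manufactures an annihilator element of $aM$ lying outside $(P:_RM)$), and reducedness of $M$ to kill the auxiliary submodule $L$. One could equally run the same computation after passing to the reduced ring $R/Ann_R(M)$, where it is just the standard fact that $Ann(\bar a\bar b)=Ann(\bar b)$ whenever $Ann(\bar a)\subseteq Ann(\bar b)$.
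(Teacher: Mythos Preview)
Your argument is correct, and its overall architecture coincides with the paper's: $(b)\Rightarrow(a)$ is trivial, and for $(a)\Rightarrow(b)$ one reduces to showing $Ann_R(abM)=Ann_R(bM)$ and then applies $(a)$ to the pair $(ab,b)$.

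The only real difference is in how the equality $Ann_R(abM)=Ann_R(bM)$ is obtained. The paper goes through Theorem~\ref{t1.5}(b): from $Ann_R(aM)\subseteq Ann_R(bM)$ one gets $(0:_MAnn_R(bM))\subseteq(0:_MAnn_R(aM))$, hence by that theorem $(0:_MAnn_R(abM))=(0:_MAnn_R(aM))\cap(0:_MAnn_R(bM))=(0:_MAnn_R(bM))$, and since $bM\subseteq(0:_MAnn_R(bM))=(0:_MAnn_R(abM))$ one reads off $Ann_R(abM)\subseteq Ann_R(bM)$. You instead argue pointwise via minimal primes, using Lemma~\ref{ll1.4} to push $rbM$ into every $P\in Min(M)$ and then invoking reducedness to kill it. Both routes ultimately exploit reducedness (the paper's Theorem~\ref{t1.5}(b) uses it to conclude $(Rtz)^2=0\Rightarrow tz=0$), but your argument is more self-contained in that it avoids the auxiliary double-annihilator identity, while the paper's is shorter once Theorem~\ref{t1.5}(b) is in hand. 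Your closing remark about passing to $R/Ann_R(M)$ is also a legitimate third variant.
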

\begin{proof}
$(a)\Rightarrow (b)$
Let for $a, b\in R$, $Ann_R(aM)\subseteq Ann_R(bM)$ and $aM \subseteq N$. Then $(0:_MAnn_R(bM)) \subseteq (0:_MAnn_R(aM))$. Hence
$(0:_MAnn_R(abM))=(0:_MAnn_R(bM))$ by Theorem \ref{t1.5} (b). It follows that $Ann_R(abM)=Ann_R(bM)$. Now as $abM \subseteq N$ we have $bM \subseteq N$ by part (a).

$(b)\Rightarrow (a)$
This is clear.
\end{proof}

\begin{thm}\label{t119.3}
Let a faithful multiplication $R$-module. Then $\mathfrak{P}_IM \subseteq \mathfrak{P}_{IM}$ for each ideal $I$ of $R$. The reverse inclusion holds when $M$ is a finitely generated $R$-module.
\end{thm}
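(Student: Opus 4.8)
\noindent\emph{Proof idea.} The plan is to lean on two standard features of multiplication modules: (i) every submodule $N$ of a multiplication module $M$ satisfies $N=(N:_RM)M$; and (ii) for a \emph{faithful} multiplication module, $(P:_RM)$ is a minimal prime ideal of $R$ whenever $P$ is a minimal prime submodule of $M$ — this is exactly the fact used in Lemma~\ref{ll1.4} via \cite[Proposition 1.5]{MR2378535}, specialised to $Ann_R(M)=0$.

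For the inclusion $\mathfrak{P}_IM\subseteq\mathfrak{P}_{IM}$, fix an ideal $I$ of $R$. If $IM$ is contained in no minimal prime submodule, then $\mathfrak{P}_{IM}=M$ and there is nothing to prove, so let $P\in Min(M)$ with $IM\subseteq P$. By (ii), $\mathfrak{q}:=(P:_RM)$ is a minimal prime ideal of $R$, and $IM\subseteq P$ gives $I\subseteq(IM:_RM)\subseteq(P:_RM)=\mathfrak{q}$. Hence $\mathfrak{q}$ is one of the minimal primes of $R$ containing $I$, so $\mathfrak{P}_I\subseteq\mathfrak{q}$, and therefore $\mathfrak{P}_IM\subseteq\mathfrak{q}M=(P:_RM)M=P$ by (i). Intersecting over all such $P$ yields $\mathfrak{P}_IM\subseteq\mathfrak{P}_{IM}$.

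For the reverse inclusion assume in addition that $M$ is finitely generated. Then $M$ is a cancellation module, i.e. $IM=JM$ forces $I=J$ for ideals $I,J$ of $R$; combined with (i) this makes $I\mapsto IM$ an inclusion-preserving bijection from the ideals of $R$ onto the submodules of $M$, with inverse $N\mapsto(N:_RM)$, and in particular $(IM:_RM)=I$ for every ideal $I$. Two consequences: first, this bijection carries prime ideals to prime submodules (a prime ideal $\mathfrak{p}$ cannot satisfy $\mathfrak{p}M=M=RM$, so $\mathfrak{p}M$ is prime), hence it restricts to a bijection of $Min(R)$ onto $Min(M)$ and matches the minimal primes of $R$ containing $I$ with the elements of $Min(M)$ containing $IM$; second, it preserves arbitrary intersections, so $\bigl(\bigcap_\alpha I_\alpha\bigr)M=\bigcap_\alpha(I_\alpha M)$ for any family of ideals ($\subseteq$ is immediate, $\supseteq$ follows by applying $(-:_RM)$). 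Putting these together,
$$\mathfrak{P}_{IM}=\bigcap_{P\in Min(M),\,IM\subseteq P}P=\bigcap_{\mathfrak{p}\in Min(R),\,I\subseteq\mathfrak{p}}\mathfrak{p}M=\Bigl(\bigcap_{\mathfrak{p}\in Min(R),\,I\subseteq\mathfrak{p}}\mathfrak{p}\Bigr)M=\mathfrak{P}_IM,$$
which in particular gives $\mathfrak{P}_{IM}\subseteq\mathfrak{P}_IM$ (and reproves the first inclusion).

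I expect the main obstacle to be this reverse inclusion, specifically the two facts packaged above: that a finitely generated faithful multiplication module is a cancellation module, and that the resulting ideal--submodule dictionary respects infinite intersections and the prime/minimal-prime correspondence; once that is in place the displayed identity is purely formal. It is also worth verifying the degenerate cases ($IM=M$, or the relevant index set or $Min(M)$ empty) so that the convention ``prime radical $=M$ when there is no containing (minimal) prime'' is applied consistently on both sides.
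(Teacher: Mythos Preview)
Your proposal is correct and follows essentially the same route as the paper: the first inclusion is argued identically via \cite[Proposition~1.5]{MR2378535}, and for the reverse inclusion both you and the paper pass through the ideal--submodule dictionary for finitely generated faithful multiplication modules (distribution of intersections and the correspondence $\mathfrak{p}\leftrightarrow\mathfrak{p}M$ between minimal primes), the paper citing \cite[Theorem~1.6 and p.~762]{MR932633} directly where you instead package the same facts under the cancellation-module formalism. The only cosmetic difference is that the paper invokes $(\bigcap\mathfrak{p}_i)M=\bigcap\mathfrak{p}_iM$ from \cite{MR932633} using faithfulness alone, whereas you deduce it from cancellation (hence using finite generation), but since finite generation is already assumed for this direction the arguments are effectively interchangeable.
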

\begin{proof}
Let $I$ be an ideal of $R$ and $X$ be a minimal prime submodule of $M$ such that $IM \subseteq X$. Then $I \subseteq (X:_RM)$. Since by \cite[Proposition 1.5]{MR2378535},  $(X:_RM)$ is a minimal prime ideal of $R$, $\mathfrak{P}_I \subseteq (X:_RM)$. Thus  $\mathfrak{P}_IM \subseteq (X:_RM)M \subseteq X$. Hence,
$\mathfrak{P}_IM \subseteq \mathfrak{P}_{IM}$ for each ideal $I$ of $R$. For the converse, let $\mathfrak{P}_I=\cap \mathfrak{p}_i$, where $\mathfrak{p}_i \in Min(R), I \subseteq \mathfrak{p}_i$, and $Min(R)$ is the set of all minimal prime ideals of $R$. As $M$ is a faithful multiplication $R$-module, by using \cite[Theorem 1.6]{MR932633},
$$
IM\subseteq \mathfrak{P}_IM=(\bigcap \mathfrak{p}_i)M =
\bigcap \mathfrak{p}_iM=\bigcap_{\mathfrak{p}_iM \not=M}\mathfrak{p}_iM.
$$
This implies that $\mathfrak{P}_{IM}\subseteq \mathfrak{P}_IM$ since $M$ is finitely generated and so by using \cite[Page 762]{MR932633}, $\mathfrak{p}_iM\not=M$ is a minimal prime submodule of $M$.
\end{proof}
\section{Quasi $z^\circ$-submodules}
\begin{defn}\label{d1.1}
We say that a proper submodule $N$ of an $R$-module $M$ is a \textit{quasi $z^\circ$-submodule} of $M$ if  $\mathfrak{P}_{aM} \subseteq N$  for all  $a \in (N:_RM)$.
\end{defn}

\begin{rem}\label{r1.1}
Let $M$ be an $R$-module.
If $N$ is a quasi $z^\circ$-submodule of $M$, then for
each $a \in (P:_RM)$ we have  $\mathfrak{P}_{aM}\not=M$, i.e. $aM$ contained at least in a minimal prime submodule of $M$.   Clearly, every minimal prime submodule of $M$ is a quasi $z^\circ$-submodule of $M$. Also, the family of
quasi $z^\circ$-submodules of $M$ is closed under intersection. Therefore, if  $\mathfrak{P}_{0}\not=M$, then $\mathfrak{P}_{0}$ is a quasi $z^\circ$-submodule of $M$ and it is contained in every quasi $z^\circ$-submodule of $M$.
\end{rem}

We give the following easy results without proofs.
\begin{prop}\label{pr551.1}
Let $N$ be a submodule of a cyclic $R$-module $M$. Then $N$ is a $z^\circ$-submodule of $M$ if and only if $N$ is a quasi $z^\circ$-submodule of $M$.
\end{prop}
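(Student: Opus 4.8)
The plan is to use the cyclicity of $M$ to identify the two conditions term by term. Write $M=Rm$. The crucial elementary remark is that the submodules of the form $aM$ (with $a\in R$) are precisely the cyclic submodules of $M$: indeed $aM=a(Rm)=R(am)$ is cyclic, and conversely any $x\in M$ has the form $x=rm$ for some $r\in R$, whence $Rx=R(rm)=r(Rm)=rM$. Consequently $\mathfrak{P}_{aM}=\mathfrak{P}_{R(am)}=\mathfrak{P}_{am}$ for every $a\in R$, and $\mathfrak{P}_x=\mathfrak{P}_{Rx}=\mathfrak{P}_{rM}$ whenever $x=rm$. These identities are the whole engine of the argument.

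For the forward implication, suppose $N$ is a $z^\circ$-submodule and let $a\in(N:_RM)$. Then $am\in aM\subseteq N$, so $\mathfrak{P}_{aM}=\mathfrak{P}_{am}\subseteq N$ since $N$ is a $z^\circ$-submodule; as $N$ is proper, $N$ is a quasi $z^\circ$-submodule. For the converse, suppose $N$ is a quasi $z^\circ$-submodule and let $x\in N$, say $x=rm$. Since $x\in N$ and $N$ is a submodule, $rM=Rx\subseteq N$, i.e.\ $r\in(N:_RM)$; hence $\mathfrak{P}_x=\mathfrak{P}_{rM}\subseteq N$, and $N$ is a $z^\circ$-submodule.

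I do not anticipate a real obstacle: the argument rests entirely on the bookkeeping identities $aM=R(am)$ and $Rx=rM$ (the first using commutativity of $R$), on the equivalence $r\in(N:_RM)\iff rM\subseteq N$, and on reading $\mathfrak{P}_x$ as the intersection of the minimal prime submodules containing the cyclic submodule $Rx$. Everything else, including the transfer of properness between the two notions, is immediate from the definitions.
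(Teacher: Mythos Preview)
Your argument is correct and is exactly the natural one: in a cyclic module $M=Rm$ the identities $aM=R(am)$ and $Rx=rM$ (for $x=rm$) make the conditions ``$\mathfrak{P}_x\subseteq N$ for all $x\in N$'' and ``$\mathfrak{P}_{aM}\subseteq N$ for all $a\in(N:_RM)$'' literally coincide. The paper itself states this proposition without proof (it is grouped under ``We give the following easy results without proofs''), so there is no alternative approach to compare with; your write-up supplies precisely the routine verification the authors had in mind.
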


\begin{lem}\label{l1.1}
Let $M$ be an $R$-module.
A submodule $N$ of $M$ is a quasi $z^\circ$-submodule if and only if $N=\sum_{a\in (N:_RM)} \mathfrak{P}_{aM}$.
\end{lem}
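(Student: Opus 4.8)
The plan is to derive both implications straight from Definition~\ref{d1.1}; the only ingredient beyond unwinding definitions is the elementary containment $aM\subseteq\mathfrak{P}_{aM}$, which holds for every $a\in R$ because every minimal prime submodule of $M$ containing $aM$ contains $aM$ (and $\mathfrak{P}_{aM}=M\supseteq aM$ if there is no such submodule).

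First I would dispose of ``$\Leftarrow$''. Assuming $N$ is a proper submodule with $N=\sum_{a\in (N:_RM)}\mathfrak{P}_{aM}$, any $b\in (N:_RM)$ exhibits $\mathfrak{P}_{bM}$ as one of the summands on the right-hand side, hence $\mathfrak{P}_{bM}\subseteq N$; since $b$ was arbitrary, $N$ is a quasi $z^\circ$-submodule of $M$. For ``$\Rightarrow$'', assume $N$ is a quasi $z^\circ$-submodule. One inclusion is immediate from the definition: $\mathfrak{P}_{aM}\subseteq N$ for every $a\in (N:_RM)$, so $\sum_{a\in (N:_RM)}\mathfrak{P}_{aM}\subseteq N$. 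For the reverse inclusion I would use $aM\subseteq\mathfrak{P}_{aM}$ and sum over $a\in(N:_RM)$ to obtain
\[
(N:_RM)M=\sum_{a\in (N:_RM)}aM\ \subseteq\ \sum_{a\in (N:_RM)}\mathfrak{P}_{aM},
\]
so that $N=(N:_RM)M\subseteq\sum_{a\in (N:_RM)}\mathfrak{P}_{aM}$; combining the two inclusions gives the asserted equality.

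I do not anticipate a genuine obstacle — indeed this is why the statement is offered without proof. The two points worth keeping explicit are (i) that ``$N$ proper'' is in force throughout, being part of the definition of a quasi $z^\circ$-submodule, and (ii) the convention $\mathfrak{P}_{aM}=M$ when $aM$ lies in no minimal prime submodule; point (ii) causes no difficulty in the ``$\Rightarrow$'' direction, since for a quasi $z^\circ$-submodule $N$ each $\mathfrak{P}_{aM}$ with $a\in(N:_RM)$ already sits inside the proper submodule $N$ and hence cannot be $M$ (cf.\ Remark~\ref{r1.1}).
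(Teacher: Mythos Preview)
Your argument for ``$\Leftarrow$'' and for the inclusion $\sum_{a\in (N:_RM)}\mathfrak{P}_{aM}\subseteq N$ in ``$\Rightarrow$'' is correct. The gap is the displayed step where you write $N=(N:_RM)M$: this identity is exactly the multiplication-module property, and Lemma~\ref{l1.1} as stated places no such hypothesis on $M$. Without it the inclusion $N\subseteq\sum_{a\in(N:_RM)}\mathfrak{P}_{aM}$ can genuinely fail. For instance, take $R=k$ a field, $M=k^{2}$, and $N=k\times\{0\}$. Every proper subspace of $M$ is a prime submodule, so $Min(M)=\{0\}$ and $\mathfrak{P}_{0}=0$; since $(N:_RM)=0$, the submodule $N$ is a quasi $z^\circ$-submodule, yet $\sum_{a\in(N:_RM)}\mathfrak{P}_{aM}=\mathfrak{P}_{0}=0\neq N$. (The same phenomenon occurs with $R=\mathbb{Z}$, $M=\mathbb{Q}$, $N=\mathbb{Z}$.)

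The paper offers Lemma~\ref{l1.1} among the ``easy results without proofs'', so there is no argument to compare against; given that the surrounding material is almost entirely about multiplication modules, the missing hypothesis is presumably an oversight in the statement rather than an intended level of generality. If you add the assumption that $M$ is a multiplication module, your proof goes through verbatim; just make the use of $N=(N:_RM)M$ explicit as an invocation of that hypothesis rather than letting it pass as a triviality.
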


\begin{prop}\label{p1000.1}
Let $N$ be a proper submodule of an $R$-module $M$. Then $N$ as an $R$-submodule is a quasi $z^\circ$-submodule if and only if as an $R/Ann_R(M)$-submodule is a quasi $z^\circ$-submodule.
\end{prop}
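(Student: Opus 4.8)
The plan is to observe that replacing $R$ by $\bar R := R/Ann_R(M)$ changes neither the lattice of submodules of $M$ nor the notion of (minimal) prime submodule, and then simply to transport the data appearing in Definition \ref{d1.1} through this identification.

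First I would record the formal bookkeeping. Writing $\bar r = r + Ann_R(M)$, the $\bar R$-action $\bar r m = rm$ shows that a subset of $M$ is an $R$-submodule precisely when it is an $\bar R$-submodule; in particular $N$ is proper over $R$ if and only if it is proper over $\bar R$. Moreover, for any submodule $K$ of $M$ one has $(K :_{\bar R} M) = (K :_R M)/Ann_R(M)$, so that $a \in (N :_R M)$ exactly when $\bar a \in (N :_{\bar R} M)$. Finally, $aM$ and $\bar a M$ denote the same submodule of $M$.

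Next I would check that the prime submodules, and hence the minimal prime submodules, coincide for the two module structures. A proper submodule $P$ is prime over $R$ exactly when $rm \in P$ forces $m \in P$ or $r \in (P :_R M)$; rewriting $rm = \bar r m$ and using $r \in (P :_R M) \Leftrightarrow \bar r \in (P :_{\bar R} M)$, this is precisely the condition that $P$ be prime over $\bar R$. Consequently $Min(M)$ is literally the same set in either case, and for a submodule $K$ the invariant $\mathfrak{P}_K$ (the intersection of the members of $Min(M)$ containing $K$, or $M$ if there are none) does not depend on whether $M$ is viewed over $R$ or over $\bar R$. Applying this with $K = aM = \bar a M$ gives that $\mathfrak{P}_{aM}$ computed over $R$ equals $\mathfrak{P}_{\bar a M}$ computed over $\bar R$.

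With these identifications the equivalence is immediate: $N$ is a quasi $z^\circ$-submodule of the $R$-module $M$ iff $\mathfrak{P}_{aM} \subseteq N$ for every $a \in (N :_R M)$, iff $\mathfrak{P}_{\bar a M} \subseteq N$ for every $\bar a \in (N :_{\bar R} M)$, iff $N$ is a quasi $z^\circ$-submodule of the $\bar R$-module $M$. I do not expect a genuine obstacle; the only point requiring care is the verification that primeness — and therefore the structure of $Min(M)$ — transfers between the $R$- and $\bar R$-structures, after which everything reduces to the dictionary $a \leftrightarrow \bar a$, $(N :_R M) \leftrightarrow (N :_{\bar R} M)$, and $aM = \bar a M$.
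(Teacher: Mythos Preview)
Your argument is correct: the dictionary between the $R$- and $R/Ann_R(M)$-module structures on $M$ preserves submodules, primeness, $Min(M)$, and hence each $\mathfrak{P}_{aM}$, so the condition in Definition~\ref{d1.1} transfers verbatim. The paper itself states this proposition without proof (it is listed among the ``easy results'' immediately preceding it), so there is nothing to compare against; your write-up simply supplies the routine verification the author chose to omit.
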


\begin{lem}\label{t19.3}
Let $M$ be a faithful multiplication $R$-module $M$. If $N$ is a quasi $z^\circ$-submodule $M$, then $(N:_RM)$ is a $z^\circ$-ideal of $R$. The converse holds when $M$ is a finitely generated $R$-module.
\end{lem}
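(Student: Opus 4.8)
The statement has two directions, and the natural strategy is to translate between "submodule of $M$ generated by an ideal" and "contraction to $R$" using the faithful multiplication hypothesis, so that the $z^\circ$-condition for $N$ in $M$ matches the $z^\circ$-condition for $(N:_RM)$ in $R$. The key dictionary entries are: (i) for a faithful multiplication module, the map $X \mapsto (X:_RM)$ is a bijection from $\mathrm{Min}(M)$ onto $\mathrm{Min}(R)$ (this is \cite[Proposition 1.5]{MR2378535}, already invoked in the proof of Theorem \ref{t119.3}); (ii) Theorem \ref{t119.3} itself, which says $\mathfrak{P}_IM \subseteq \mathfrak{P}_{IM}$ always, with equality when $M$ is finitely generated; and (iii) the elementary fact that for $a \in R$, $(\mathfrak{P}_{aM}:_RM)$ relates to $\mathfrak{P}_{(aM:_RM)}$, which contains $\mathfrak{P}_a$ since $a \in (aM:_RM)$.

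\medskip

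\noindent\textbf{Forward direction ($N$ quasi $z^\circ$ $\Rightarrow$ $(N:_RM)$ is a $z^\circ$-ideal).} First I would check $(N:_RM)$ is a proper ideal, which is immediate since $N \neq M$ and $M$ is faithful multiplication (so $(M:_RM)=R$ and $(N:_RM)=R$ would force $N=M$). Now take $a \in (N:_RM)$; I must show $\mathfrak{P}_a \subseteq (N:_RM)$, i.e. $\mathfrak{P}_a M \subseteq N$. Since $a \in (N:_RM)$ means $aM \subseteq N$, in particular $a \in (aM:_RM) \subseteq (N:_RM)$, and more to the point $a$ itself lies in the ideal $(N:_RM)$; applying the hypothesis to this $a$ gives $\mathfrak{P}_{aM} \subseteq N$. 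By Theorem \ref{t119.3} applied to the principal ideal $Ra$, we have $\mathfrak{P}_{Ra}M = \mathfrak{P}_a M \subseteq \mathfrak{P}_{RaM} = \mathfrak{P}_{aM} \subseteq N$, hence $\mathfrak{P}_a \subseteq (N:_RM)$, as wanted. Note this direction only uses the "always-true" inclusion of Theorem \ref{t119.3}, so finite generation is not needed here.

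\medskip

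\noindent\textbf{Converse ($(N:_RM)$ a $z^\circ$-ideal and $M$ f.g. $\Rightarrow$ $N$ quasi $z^\circ$).} Let $I = (N:_RM)$, which is a $z^\circ$-ideal, and recall that for a multiplication module $N = IM$. Take $a \in I = (N:_RM)$; I need $\mathfrak{P}_{aM} \subseteq N$. Since $a \in I$ and $I$ is a $z^\circ$-ideal, $\mathfrak{P}_a \subseteq I$, so $\mathfrak{P}_a M \subseteq IM = N$. Now invoke the reverse inclusion of Theorem \ref{t119.3} (valid because $M$ is finitely generated faithful multiplication): $\mathfrak{P}_{aM} = \mathfrak{P}_{RaM} \subseteq \mathfrak{P}_{Ra}M = \mathfrak{P}_a M \subseteq N$. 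This gives exactly the quasi $z^\circ$-condition for $N$.

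\medskip

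\noindent\textbf{Main obstacle.} The only subtle point is making sure the two ``$\mathfrak{P}$'' notations (for ideals of $R$ and for submodules of $M$) line up correctly on principal/cyclic data — i.e. that $\mathfrak{P}_{RaM}$ in the module sense and $\mathfrak{P}_{Ra}M$ built from the ideal $\mathfrak{P}_a$ interact as Theorem \ref{t119.3} claims — and being careful that $\mathfrak{P}_{aM} \neq M$ when needed (Remark \ref{r1.1}), which is automatic once we have shown $\mathfrak{P}_{aM} \subseteq N \subsetneq M$. Everything else is bookkeeping with the correspondence $\mathrm{Min}(M) \leftrightarrow \mathrm{Min}(R)$ and the identity $N = (N:_RM)M$ for multiplication modules; I do not anticipate any genuinely hard step, since Theorem \ref{t119.3} does the real work.
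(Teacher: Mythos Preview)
Your proof is correct and follows exactly the approach of the paper, which simply cites Theorem \ref{t119.3}: you apply the inclusion $\mathfrak{P}_aM \subseteq \mathfrak{P}_{aM}$ for the forward direction and the reverse inclusion (under finite generation) for the converse, together with $N=(N:_RM)M$. There is nothing different to comment on.
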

\begin{proof}
This follows from Theorem \ref{t119.3}.
\end{proof}

\begin{thm}\label{c19.3}
Let $M$ be a faithful multiplication $R$-module. Then we have the following.
\begin{itemize}
\item [(a)] Let $M$ be a finitely generated $R$-module and $I$ be a $z^\circ$-ideal of $R$. Then $IM$ is a quasi $z^\circ$-submodule of $M$.
\item [(b)] Let $R$ be a reduced ring and $N$ be a quasi $z^\circ$-submodule of an $R$-module $M$. Then $(N:_R(K:_RM)M)$ is a  $z^\circ$-ideal of $R$ for each submodule $K$ of $M$. In particular, if $\mathfrak{P}_0=0$, then $Ann_R((K:_RM)M)$ is a $z^\circ$-ideal of $R$ for each submodule $K$ of $M$.
\item [(c)] Let $R$ be a reduced ring and $N$ be a quasi $z^\circ$-submodule of a multiplication $R$-module $M$. Then $(N:_RK)$ is a $z^\circ$-ideal of $R$ for each submodule $K$ of $M$. In particular, if $\mathfrak{P}_0=0$, then $Ann_R(K)$ is a $z^\circ$-ideal of $R$ for each submodule $K$ of $M$.
\end{itemize}
 \end{thm}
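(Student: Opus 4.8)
The plan is to reduce all three parts to Theorem \ref{t119.3}, together with the elementary observation that a minimal prime ideal of $R$ contains a product $ab$ exactly when it contains $a$ or $b$, so that $\mathfrak{P}_{ab}=\mathfrak{P}_a\cap\mathfrak{P}_b$ for all $a,b\in R$. For the ``in particular'' clauses I will use that, since $M$ is faithful, the zero submodule is itself a quasi $z^\circ$-submodule precisely when $\mathfrak{P}_0=0$ (immediate from Definition \ref{d1.1}), so that those assertions are the case $N=0$ of the main ones.

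For \textit{(a)}, I would first use that $M$ is a finitely generated faithful multiplication module, so $(IM:_RM)=I$; hence it suffices to show $\mathfrak{P}_{aM}\subseteq IM$ for each $a\in I$. Since $I$ is a $z^\circ$-ideal, $\mathfrak{P}_a\subseteq I$, and since $M$ is finitely generated, Theorem \ref{t119.3} applied to the principal ideal $(a)$ gives $\mathfrak{P}_{aM}=\mathfrak{P}_{(a)M}=\mathfrak{P}_{(a)}M=\mathfrak{P}_a M\subseteq IM$; moreover $IM$ is proper because $(IM:_RM)=I\neq R$. So $IM$ is a quasi $z^\circ$-submodule.

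For \textit{(b)}, put $J=(K:_RM)$ and take $a\in(N:_R JM)$, so $aJ\subseteq(N:_RM)$; the goal is $\mathfrak{P}_a JM\subseteq N$. Fixing $r\in J$, one has $ar\in(N:_RM)$, whence $\mathfrak{P}_{arM}\subseteq N$ because $N$ is a quasi $z^\circ$-submodule. On the other hand $\mathfrak{P}_a r\subseteq\mathfrak{P}_a\cap(r)\subseteq\mathfrak{P}_a\cap\mathfrak{P}_r=\mathfrak{P}_{ar}$ (using $r\in\mathfrak{P}_r$ and the factorization above), so, by the inclusion $\mathfrak{P}_{(ar)}M\subseteq\mathfrak{P}_{(ar)M}$ of Theorem \ref{t119.3}, $\mathfrak{P}_a rM\subseteq\mathfrak{P}_{ar}M\subseteq\mathfrak{P}_{arM}\subseteq N$. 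Summing over $r\in J$ gives $\mathfrak{P}_a JM=\sum_{r\in J}\mathfrak{P}_a rM\subseteq N$, i.e. $\mathfrak{P}_a\subseteq(N:_R JM)$; thus $(N:_R(K:_RM)M)$ is a $z^\circ$-ideal (proper as long as $(K:_RM)M\not\subseteq N$). Taking $N=0$, which is permitted once $\mathfrak{P}_0=0$, yields that $Ann_R((K:_RM)M)=(0:_R(K:_RM)M)$ is a $z^\circ$-ideal. For \textit{(c)}, since $M$ is a multiplication module $(K:_RM)M=K$, so $(N:_RK)=(N:_R(K:_RM)M)$ and the statement, including the assertion on $Ann_R(K)$, is exactly part (b).

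I expect the only real work to lie in the chain $\mathfrak{P}_a rM\subseteq\mathfrak{P}_{arM}$ of part (b): one must carry the ring-level identity $\mathfrak{P}_{ar}=\mathfrak{P}_a\cap\mathfrak{P}_r$ over to an inclusion of submodules of $M$, which is exactly what the ``$\subseteq$'' half of Theorem \ref{t119.3} supplies; parts (a) and (c) are then routine, and the reducedness of $R$ (equivalently, by \cite{MR2839935}, of $M$) is used only to keep minimal primes and the operator $\mathfrak{P}_{(-)}$ well behaved.
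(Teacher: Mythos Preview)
Your argument is correct. For (a) and (c) you do exactly what the paper does: in (a) you use $(IM:_RM)=I$ and Theorem~\ref{t119.3}, which is precisely the content of Lemma~\ref{t19.3} that the paper invokes; in (c) you reduce to (b) via $K=(K:_RM)M$, as the paper does.

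For (b) your route differs from the paper's. The paper first applies Lemma~\ref{t19.3} to get that $(N:_RM)$ is a $z^\circ$-ideal of the reduced ring $R$, then uses the identity $(N:_R(K:_RM)M)=((N:_RM):_R(K:_RM))$ together with the known fact (cited from \cite{MR1736781}) that a colon ideal $(I:_RJ)$ is a $z^\circ$-ideal whenever $I$ is. You instead stay at the module level and prove the colon statement from scratch: for $r\in J$ and $a\in(N:_RJM)$ you use $\mathfrak{P}_a r\subseteq\mathfrak{P}_a\cap\mathfrak{P}_r=\mathfrak{P}_{ar}$, then the inclusion $\mathfrak{P}_{ar}M\subseteq\mathfrak{P}_{arM}$ from Theorem~\ref{t119.3}, and finally the quasi $z^\circ$ hypothesis to land in $N$. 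Both arguments are sound; the paper's is shorter because it delegates the $z^\circ$-ideal machinery to the literature, while yours is more self-contained and makes explicit exactly where reducedness (through $\mathfrak{P}_{ab}=\mathfrak{P}_a\cap\mathfrak{P}_b$) and Theorem~\ref{t119.3} enter. Your handling of the ``in particular'' clauses via $N=0$ when $\mathfrak{P}_0=0$ matches the paper's use of Remark~\ref{r1.1}.
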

\begin{proof}
(a) By \cite[Theorem 10]{MR933916}, $I=(IM:_RM)$. Now the result follows from Lemma \ref{t19.3}.

(b) As $N$ is a quasi $z^\circ$-submodule, $(N:_RM)$ is a $z^\circ$-ideal of $R$ by Lemma \ref{t19.3}. Let $K$ be a submodule of $M$. Then by \cite[Examples of $z^\circ$-ideals]{MR1736781}, $((N:_RM):_R(K:_RM))$ is a $z^\circ$-ideal of $R$. Now $(N:_R(K:_RM)M)=((N:_RM):_R(K:_RM))$ implies that $(N:_R(K:_RM)M)$ is a $z^\circ$-ideal of $R$. Now the last assertion follows from the fact that $\mathfrak{P}_0$ is a quasi $z^\circ$-submodule of $M$ by Remark \ref{r1.1}.

(c) As $M$ is a multiplication $R$-module, $K=(K:_RM)M$. Now the result follows from part (b)
\end{proof}

\begin{cor}\label{cc19.3}
Let $M$ be a faithful cyclic $R$-module. Then $N$ is a $z^\circ$-submodule of an $R$-module $M$ if and only if $(N:_RM)$ is a $z^\circ$-ideal of $R$.
\end{cor}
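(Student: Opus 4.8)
The plan is to deduce this directly from Proposition~\ref{pr551.1} together with Lemma~\ref{t19.3}, so almost no new work is needed. The key preliminary observation is that a faithful cyclic $R$-module is automatically a faithful multiplication $R$-module that is moreover finitely generated: it is finitely generated because it is generated by a single element, and if $M=Rm$ then for any submodule $N$ and any $n=rm\in N$ we have $rM=Rrm=Rn\subseteq N$, so $r\in(N:_RM)$ and hence $N=(N:_RM)M$, showing $M$ is a multiplication module. (Likewise $N$ is proper if and only if $(N:_RM)$ is proper, since $1\in(N:_RM)$ forces $M=1\cdot M\subseteq N$.)

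Granting this, Lemma~\ref{t19.3} applies to $M$ in both directions: faithful multiplication gives that a quasi $z^\circ$-submodule $N$ has $(N:_RM)$ a $z^\circ$-ideal, and the finitely generated hypothesis gives the converse. Hence $N$ is a quasi $z^\circ$-submodule of $M$ if and only if $(N:_RM)$ is a $z^\circ$-ideal of $R$. On the other hand, since $M$ is cyclic, Proposition~\ref{pr551.1} shows that $N$ is a $z^\circ$-submodule of $M$ if and only if $N$ is a quasi $z^\circ$-submodule of $M$. Chaining these two equivalences yields precisely the assertion: $N$ is a $z^\circ$-submodule of $M$ if and only if $(N:_RM)$ is a $z^\circ$-ideal of $R$.

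I expect no genuine obstacle here; the result is a formal corollary of the preceding two statements. The only point worth spelling out is the routine fact that a cyclic module is finitely generated and a multiplication module, which is what lets Lemma~\ref{t19.3} be invoked in the "if" direction as well as the "only if" direction.
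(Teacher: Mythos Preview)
Your proposal is correct and follows essentially the same route as the paper, which simply invokes Lemma~\ref{t19.3} together with the fact that cyclic modules are multiplication modules. If anything, your argument is more complete: you explicitly cite Proposition~\ref{pr551.1} to pass between $z^\circ$-submodules and quasi $z^\circ$-submodules, and you spell out that cyclic implies finitely generated (needed for the converse direction of Lemma~\ref{t19.3}), both of which the paper's one-line proof leaves implicit.
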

\begin{proof}
This follows from Lemma \ref{t19.3} and the fact that every cyclic $R$-module is a multiplication $R$-module.
\end{proof}

Let $M$ be an $R$-module. The set of torsion elements
of $M$ with respect to $R$ is $T_0(M) = \{m \in M |rm = 0 \ for \ some\ 0 \not= r \in R\}$.
\begin{thm}\label{p1.11}
Let $M$ be a faithful reduced multiplication $R$-module, $a , b \in M$, and $b \in Ann_R(aM)$.
If $Ann(aM)M = \mathfrak{P}_{bM}$, then $(a +b)M \not\subseteq T_0(M)$. The converse holds when $M$ is faithful and $Ann_R(a)M=Ann_R(aM)M$ is a quasi $z^\circ$-submodule of $M$.
\end{thm}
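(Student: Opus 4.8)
Throughout, $a$ and $b$ are elements of $R$ (as is forced by the expressions $aM$, $Ann_R(aM)$, $bM$); since $M$ is faithful we have $Ann_R(cM)=Ann_R(c)$ for every $c\in R$, the condition $b\in Ann_R(aM)$ just says $ab=0$, and $R=R/Ann_R(M)$ is reduced. I will use two elementary facts: $(0:_Mc)=Ann_R(c)M$ for every $c\in R$ (since $M$ is multiplication, $(0:_Mc)=IM$ for some ideal $I$, whence $cI\subseteq Ann_R(M)=0$ forces $I\subseteq Ann_R(c)$, and the reverse inclusion is clear), and $\mathfrak{P}_{bM}=(0:_MAnn_R(bM))=(0:_MAnn_R(b))$ by Corollary \ref{c1.13}.

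The first step is to reformulate the equality in the hypothesis. The inclusion $\mathfrak{P}_{bM}\subseteq Ann_R(a)M$ holds unconditionally: since $ab=0$ we have $a\in Ann_R(b)$, hence $\mathfrak{P}_{bM}=(0:_MAnn_R(b))\subseteq(0:_Ma)=Ann_R(a)M$. For the opposite inclusion, $Ann_R(a)M\subseteq\mathfrak{P}_{bM}=(0:_MAnn_R(b))$ is equivalent to $Ann_R(a)\,Ann_R(b)\subseteq Ann_R(M)=0$. Now, because $ab=0$ and $R$ is reduced, $Ann_R(a+b)=Ann_R(a)\cap Ann_R(b)$ (if $r(a+b)=0$, multiplying by $a$ gives $ra^{2}=0$, so $(ra)^{2}=0$, so $ra=0$, and symmetrically $rb=0$), while in any reduced ring $IJ=0$ if and only if $I\cap J=0$ since $(I\cap J)^{2}\subseteq IJ\subseteq I\cap J$. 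Combining these:
\[
Ann_R(aM)M=\mathfrak{P}_{bM}\ \Longleftrightarrow\ Ann_R(a+b)=0 .
\]

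Now I turn to the theorem. For the converse statement: assuming $(a+b)M\not\subseteq T_0(M)$, pick $m\in M$ with $(a+b)m\notin T_0(M)$, that is, $Ann_R((a+b)m)=0$; then $Ann_R(a+b)\subseteq Ann_R((a+b)m)=0$, and the displayed equivalence gives $Ann_R(aM)M=\mathfrak{P}_{bM}$. (In this argument the hypothesis that $Ann_R(a)M=Ann_R(aM)M$ be a \emph{quasi $z^\circ$-submodule} is needed only to ensure that this submodule is proper, i.e. $a\neq 0$.) For the forward statement: assume $Ann_R(aM)M=\mathfrak{P}_{bM}$, so $Ann_R(a+b)=0$ and hence $(0:_M(a+b))=Ann_R(a+b)M=0$, i.e. multiplication by $a+b$ is injective on $M$. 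Consequently, for each $m\in M$, $(a+b)m\in T_0(M)$ iff $r(a+b)m=0$ for some $0\neq r\in R$, iff (by injectivity) $rm=0$ for some $0\neq r\in R$, iff $m\in T_0(M)$. Therefore $(a+b)M\subseteq T_0(M)$ would force $M=T_0(M)$, so $(a+b)M\not\subseteq T_0(M)$.

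The step I expect to be the main obstacle is the very last inference: that a nonzero faithful reduced multiplication module cannot be a torsion module, i.e. $M\neq T_0(M)$. One first shows $T_0(M)\subseteq\bigcup_{P\in Min(M)}P$: if $0\neq r$ and $rm=0$, then, using $\bigcap_{P\in Min(M)}(P:_RM)=0$ (this intersection multiplied by $M$ sits inside $\bigcap_{P\in Min(M)}P=0$ by reducedness, and $M$ is faithful), there is some $P\in Min(M)$ with $r\notin(P:_RM)$, whence $m\in P$ because $P$ is prime. So $M=T_0(M)$ would yield $M=\bigcup_{P\in Min(M)}P$, which prime avoidance excludes when $Min(M)$ is finite; dealing with the case of infinitely many minimal prime submodules is the delicate point, and it is here that the full strength of the hypotheses must be brought to bear.
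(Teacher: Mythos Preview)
Your approach is genuinely different from the paper's. You first prove the clean equivalence
\[
Ann_R(aM)M=\mathfrak{P}_{bM}\ \Longleftrightarrow\ Ann_R(a+b)=0,
\]
via Corollary~\ref{c1.13}, the identity $(0:_Mc)=Ann_R(c)M$, and the reduced-ring facts $Ann_R(a+b)=Ann_R(a)\cap Ann_R(b)$ (from $ab=0$) and $IJ=0\Leftrightarrow I\cap J=0$. This reformulation is correct, and with it your converse is complete; in fact it does not substantively use the quasi $z^\circ$-hypothesis at all (your parenthetical that it is needed ``only to ensure $a\neq0$'' overstates even that --- your argument goes through when $a=0$, both sides then equalling $M$). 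The paper, by contrast, argues the converse prime-by-prime and invokes the quasi $z^\circ$-hypothesis precisely to obtain the inclusion $\mathfrak{P}_{bM}\subseteq Ann_R(aM)M$, which you get for free from $ab=0$ and Corollary~\ref{c1.13}. So on the converse your argument is both different and sharper.

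For the forward direction your proof has the gap you yourself flag: from $Ann_R(a+b)=0$ you reduce to showing $M\neq T_0(M)$ and do not finish. The paper does not pass through this reduction. It invokes $T_0(M)=\bigcup_{P\in Min(M)}P$ (cited from \cite{F402}) and argues directly that $(a+b)M$ cannot be contained in any single $P\in Min(M)$: splitting into the cases $a\in(P:_RM)$ and $a\notin(P:_RM)$, and using Theorem~\ref{tt1.11} (that $V(aM)$ and $V(Ann_R(a)M)$ partition $Min(M)$) together with $Ann_R(aM)M=\mathfrak{P}_{bM}$, yields a contradiction in each case. From ``$(a+b)M\not\subseteq P$ for every $P\in Min(M)$'' the paper then concludes $(a+b)M\not\subseteq T_0(M)$. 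Thus where you isolate the obstacle as ``$M\neq T_0(M)$'', the paper's route is to work minimal-prime-by-minimal-prime using Theorem~\ref{tt1.11}; the final passage from ``not in any $P$'' to ``not in $\bigcup_P P$'' is taken as immediate from the cited description of $T_0(M)$, and that step is exactly the residue of the difficulty you single out.
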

\begin{proof}
First note that $T_0(M)=\cup_{P \in Min(M)}P$ by \cite[Corollary 2.9]{F402}.
Now let $Ann(aM)M = \mathfrak{P}_{bM}$ and $(a +b)M$ belong to a minimal prime submodule $P$ of
$M$ and seek a contradiction. We consider two cases. First, let $a \in (P:_RM)$. Then $b \in (P:_RM)$ implies that $\mathfrak{P}_{bM}\subseteq P$, i.e., $\mathfrak{P}_{bM} =Ann_R(aM)M \subseteq P$, which is a contradiction by Theorem \ref{tt1.11}. Now let $a \not \in (P:_RM)$, then we must have $b \not \in (P:_RM)$, i.e.,
$Ann(aM)M = \mathfrak{P}_{bM}\not \subseteq P$, which is impossible by Theorem \ref{tt1.11}.. Conversely, if $(a +b)M \not \subseteq T_0(M)$, we are to show that $Ann(aM)M \subseteq \mathfrak{P}_{bM}$. Let $P$ be a minima1
prime submodule with $b \in (P:_RM)$, then $a + b \not \in (P:_RM)$ implies that $a \not \in (P:_RM)$, i.e., $Ann(aM)M \subseteq P$.
Hence $Ann(aM)M \subseteq \mathfrak{P}_{bM}$. The reverse inclusion follows from the fact that $Ann_R(aM)M$ is a quasi $z^\circ$-submodule of $M$.
\end{proof}

Let $M$ be an $R$-module. The set of zero divisors of $R$
on $M$ is $Zd_R(M)= \{r \in R| rm=0 \ for \ some\ nonzero\ m \in M\}$.
\begin{prop}\label{p1.1}
Let $M$ be a faithful multiplication $R$-module. If $N$ is a quasi $z^\circ$-submodule of $ M$. Then $(N:_RM) \subseteq Zd_R(M)$.
\end{prop}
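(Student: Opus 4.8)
The plan is to locate, for each $a\in(N:_RM)$, a minimal prime ideal of $R$ containing $a$, and then to exploit the standard fact that an element of a minimal prime ideal is annihilated — after multiplying by something outside that prime — by one of its powers; faithfulness of $M$ then turns this relation into an honest zero divisor of $R$ on $M$.

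First I would fix $a\in(N:_RM)$. Since $N$ is a quasi $z^\circ$-submodule, $\mathfrak{P}_{aM}\subseteq N$, and as $N$ is proper, $\mathfrak{P}_{aM}\neq M$; hence by Remark \ref{r1.1} the submodule $aM$ is contained in some minimal prime submodule $P$ of $M$, i.e. $a\in(P:_RM)$. Because $M$ is faithful, $Ann_R(M)=0$, so by \cite[Proposition 1.5]{MR2378535} the ideal $\mathfrak{q}:=(P:_RM)$ is a minimal prime ideal of $R$.

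Next, using that $\mathfrak{q}$ is minimal, localizing at $\mathfrak{q}$ gives a ring $R_{\mathfrak{q}}$ with a single prime, so $\mathfrak{q}R_{\mathfrak{q}}$ equals the nilradical of $R_{\mathfrak{q}}$; thus the image of $a$ in $R_{\mathfrak{q}}$ is nilpotent, which means there exist $s\in R\setminus\mathfrak{q}$ and $n\ge 1$ with $sa^{n}=0$ in $R$. (If one is willing to assume in addition that $M$ is reduced, Lemma \ref{ll1.4} provides directly such an $s\in Ann_R(a)\setminus\mathfrak{q}$ with $n=1$, shortening this step.) Since $s\notin\mathfrak{q}=(P:_RM)$ we have $sM\not\subseteq P$, in particular $sM\neq 0$, so we may pick $m\in M$ with $m_{0}:=sm\neq 0$; then $a^{n}m_{0}=(a^{n}s)m=0$.

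Finally, from $a^{n}m_{0}=0$ with $m_{0}\neq 0$ I would choose the largest $j$ with $0\le j\le n-1$ and $a^{j}m_{0}\neq 0$ (such $j$ exists because $a^{0}m_{0}=m_{0}\neq 0$). Putting $m_{1}:=a^{j}m_{0}$ we get $m_{1}\neq 0$ and $am_{1}=a^{j+1}m_{0}=0$, so $a\in Zd_R(M)$. As $a\in(N:_RM)$ was arbitrary, this yields $(N:_RM)\subseteq Zd_R(M)$. The only step carrying real content is the implication ``$a$ lies in a minimal prime ideal of $R$'' $\Rightarrow$ ``$a$ is a zero divisor on $M$''; the point to be careful about is that faithfulness is exactly what converts the relation $sa^{n}=0$ in $R$ into a nonzero element of $M$ killed by a power of $a$, after which the descent through $m_{0},am_{0},a^{2}m_{0},\dots$ extracts a genuine zero divisor.
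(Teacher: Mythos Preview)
Your argument is correct, but it follows a different path from the paper's. The paper proceeds by first invoking Lemma~\ref{t19.3} to conclude that $(N:_RM)$ is a $z^\circ$-ideal of $R$, then quoting the known fact (from \cite{MR1736781}) that every $z^\circ$-ideal consists entirely of zero divisors of $R$, and finally using the identity $Zd_R(R)=Zd_R(M)$ for faithful multiplication modules \cite[Lemma~2.1]{MR3755273}. Your proof bypasses the $z^\circ$-ideal machinery altogether: you go straight from the quasi $z^\circ$-property to the existence of a minimal prime submodule containing $aM$, pass to a minimal prime ideal of $R$ via \cite[Proposition~1.5]{MR2378535}, and then use the elementary localization argument to exhibit a nonzero element of $M$ killed by $a$. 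The paper's route is shorter on the page because it delegates the work to two citations, and it highlights the structural link with the ring-theoretic notion; your route is more self-contained and makes transparent exactly where faithfulness and the multiplication hypothesis enter, at the cost of reproving by hand the classical fact that elements of a minimal prime are zero divisors.
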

\begin{proof}
By \cite[Lemma 2.1]{MR3755273}, $Zd_R(R)=Zd_R(M)$. Now the result follows from the fact that $(N:_RM)$ is a $z^\circ$-ideal of $R$ by Lemma \ref{t19.3}.
\end{proof}

\begin{thm}\label{t1.6}
Let $M$ be a faithful reduced multiplication $R$-module. Then the following are equivalent:
\begin{itemize}
\item [(a)] $N$ is a quasi $z^\circ$-submodule of $M$;
\item [(b)] For each $a, b  \in (N:_RM)$, $\mathfrak{P}_{aM}=\mathfrak{P}_{bM}$ and $aM \subseteq  N$ imply that $bM \subseteq N$;
\item [(c)] For each $a, b  \in (N:_RM)$, $V(aM)=V(bM)$ and $aM \subseteq N$ imply that $bM \subseteq N$;
\item [(d)] For each ideal $a \in R$, we have $aM \subseteq N$ implies that $(0:_MAnn_R(aM)) \subseteq N$;
\item [(e)] For $a, b \in R$, $Ann_R(aM)=Ann_R(bM)$ and $aM \subseteq N$ imply that $bM \subseteq N$;
\item [(f)] For $a, b\in R$, $Ann_R(aM)\subseteq Ann_R(bM)$ and $aM \subseteq N$ imply that $bM \subseteq N$.
\end{itemize}
\end{thm}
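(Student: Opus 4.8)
The plan is to prove the cycle of implications $(a)\Rightarrow(b)\Rightarrow(c)\Rightarrow(e)\Rightarrow(f)\Rightarrow(d)\Rightarrow(a)$, throughout treating $N$ as a \emph{proper} submodule and, in $(b)$ and $(c)$, letting $a,b$ range over $R$ (as in $(e)$ and $(f)$). Three earlier results do the real work: Corollary \ref{c1.13}, which identifies $(0:_MAnn_R(aM))$ with $\mathfrak{P}_{aM}$; Theorem \ref{t1.5}(a), which identifies $V(aM)$ with $V((0:_MAnn_R(aM)))$; and Theorem \ref{t4.6}, which already yields $(e)\Leftrightarrow(f)$. Beyond these, everything is a translation among the three descriptions $\mathfrak{P}_{(-)M}$, $Ann_R((-)M)$ and $V((-)M)$ of the same data, together with the observation that $\mathfrak{P}_{aM}$ is by definition the intersection of the minimal prime submodules lying in $V(aM)$, so that $bM\subseteq\mathfrak{P}_{bM}$ always and $V(aM)=V(bM)$ forces $\mathfrak{P}_{aM}=\mathfrak{P}_{bM}$.

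The four easy links go as follows. For $(a)\Rightarrow(b)$: $aM\subseteq N$ gives $a\in(N:_RM)$, hence $\mathfrak{P}_{aM}\subseteq N$ by the definition of a quasi $z^\circ$-submodule, and therefore $bM\subseteq\mathfrak{P}_{bM}=\mathfrak{P}_{aM}\subseteq N$. For $(b)\Rightarrow(c)$: the hypothesis $V(aM)=V(bM)$ gives $\mathfrak{P}_{aM}=\mathfrak{P}_{bM}$ (equal intersections over equal index sets), and then $(b)$ finishes it. For $(c)\Rightarrow(e)$: if $Ann_R(aM)=Ann_R(bM)$ then $(0:_MAnn_R(aM))=(0:_MAnn_R(bM))$, so by Theorem \ref{t1.5}(a) we get $V(aM)=V((0:_MAnn_R(aM)))=V((0:_MAnn_R(bM)))=V(bM)$, and $(c)$ applies. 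Finally $(e)\Rightarrow(f)$ is exactly Theorem \ref{t4.6}.

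The substantive step is $(f)\Rightarrow(d)$, after which $(d)\Rightarrow(a)$ is routine. Given $aM\subseteq N$, set $K=(0:_MAnn_R(aM))$; since $M$ is a multiplication module, $K=(K:_RM)M$. For each $b\in(K:_RM)$ we have $bM\subseteq K$, hence $Ann_R(aM)\,bM=0$, i.e.\ $Ann_R(aM)\subseteq Ann_R(bM)$, so $(f)$ yields $bM\subseteq N$; summing over all such $b$ gives $K=(0:_MAnn_R(aM))\subseteq N$, which is $(d)$. For $(d)\Rightarrow(a)$: if $a\in(N:_RM)$ then $aM\subseteq N$, so $(d)$ together with Corollary \ref{c1.13} gives $\mathfrak{P}_{aM}=(0:_MAnn_R(aM))\subseteq N$, and as $N$ is proper this says precisely that $N$ is a quasi $z^\circ$-submodule. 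I expect the only genuine obstacle to be the $(f)\Rightarrow(d)$ step: one cannot apply $(f)$ directly to the submodule $(0:_MAnn_R(aM))$, but must first break it into principal pieces $bM$ via the multiplication hypothesis and then exploit the one-sided containment $Ann_R(aM)\subseteq Ann_R(bM)$ — which is exactly why the ``$\subseteq$'' version $(f)$ is needed rather than only $(e)$. Everything else is bookkeeping once Corollary \ref{c1.13} and Theorems \ref{t1.5} and \ref{t4.6} are in hand.
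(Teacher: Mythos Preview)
Your proof is correct and follows essentially the same strategy as the paper's: both establish a cycle of implications and both rely on Corollary~\ref{c1.13}, Theorem~\ref{t1.5}(a) and Theorem~\ref{t4.6} for the real content. The paper orders the cycle as $(a)\Rightarrow(b)\Rightarrow(c)\Rightarrow(d)\Rightarrow(e)\Rightarrow(f)\Rightarrow(a)$, whereas you do $(a)\Rightarrow(b)\Rightarrow(c)\Rightarrow(e)\Rightarrow(f)\Rightarrow(d)\Rightarrow(a)$; the ``multiplication'' step---breaking $(0:_MAnn_R(aM))$ into principal pieces $bM$ and applying $(f)$ to each---appears in the paper's $(f)\Rightarrow(a)$ and in your $(f)\Rightarrow(d)$, with the same argument. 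Your reordering has a small bonus: your $(c)\Rightarrow(e)$ applies condition $(c)$ only to genuine pairs $a,b\in R$, while the paper's $(c)\Rightarrow(d)$ tacitly treats $(0:_MAnn_R(aM))$ as if it were of the form $bM$, which is not guaranteed. Your explicit remark that $a,b$ in $(b)$ and $(c)$ should range over $R$ (since $b\in(N:_RM)$ would make the conclusion $bM\subseteq N$ trivial) is also well taken.
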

\begin{proof}
$(a)\Rightarrow (b)$
Let for $a, b  \in (N:_RM)$, $\mathfrak{P}_{aM}=\mathfrak{P}_{bM}$ and $aM \subseteq  N$. By part (a), $\mathfrak{P}_{aM}\subseteq N$. Thus $bM \subseteq \mathfrak{P}_{bM} \subseteq N$.

$(b)\Rightarrow (c)$
Let for $a, b  \in (N:_RM)$, $V(aM)=V(bM)$ and $aM \subseteq N$. Then  $\mathfrak{P}_{aM}=\mathfrak{P}_{bM}$. Thus by part (b), $bM \subseteq N$.

$(c)\Rightarrow (d)$
Let $aM \subseteq N$. Then  $V(aM)=V((0:_MAnn_R(aM))$  by Theorem \ref{t1.5}. Thus by part (c), $(0:_MAnn_R(aM)) \subseteq N$.

$(d)\Rightarrow (e)$
Let for $a, b  \in (N:_RM)$, $Ann_R(aM)=Ann_R(bM)$ and $aM \subseteq N$. Then $(0:_MAnn_R(aM))=(0:_MAnn_R(bM))$. By part (d), $(0:_MAnn_R(aM)) \subseteq N$. Thus $bM \subseteq (0:_MAnn_R(bM))\subseteq N$.

$(e)\Rightarrow (f)$
By Theorem \ref{t4.6}.

$(f)\Rightarrow (a)$
Let $aM \subseteq N$. By Corollary \ref{c1.13}, $(0:_MAnn_R(aM))=\mathfrak{P}_{aM}$. Let $x \in (0:_MAnn_R(aM))$. Then $Ann_R(aM) \subseteq Ann_R(x)$. As $M$ is a multiplication $R$-module, $Rx=JM$ for some ideal $J$ or $R$. Let $b \in J$. Then $Ann_R(JM) \subseteq Ann_R(bM)$. Therefore, $Ann_R(aM) \subseteq Ann_R(bM)$. Thus by part (f), $bM \subseteq N$ and so $Rx=JM \subseteq N$. This implies that $\mathfrak{P}_{aM}=(0:_MAnn_R(aM))\subseteq N$.
\end{proof}

An $R$-module $M$ is said to be a \emph{comultiplication module} if for every submodule $N$ of $M$ there exists an ideal $I$ of $R$ such that $N=(0:_MI)$ equivalently, for each submodule $N$ of $M$, we have $N=(0:_MAnn_R(N))$  \cite{MR3934877}.
\begin{cor}\label{c1.7}
Every proper submodule of a faithful reduced multiplication and comultiplication $R$-module is a quasi $z^\circ$-submodule.
\end{cor}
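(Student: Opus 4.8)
The plan is to invoke the equivalence established in Theorem \ref{t1.6}, since the module here is assumed faithful, reduced, and multiplication, so all six conditions listed there are available. Concretely, I would check condition (d): for a proper submodule $N$ and each $a \in R$, the inclusion $aM \subseteq N$ should force $(0:_M Ann_R(aM)) \subseteq N$. Once (d) is verified for every proper submodule $N$, Theorem \ref{t1.6} immediately yields that $N$ is a quasi $z^\circ$-submodule, which is exactly the claim.

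The verification of (d) is where the comultiplication hypothesis enters, and it is short. Suppose $aM \subseteq N$. Then every element of $R$ annihilating $N$ also annihilates $aM$, i.e. $Ann_R(N) \subseteq Ann_R(aM)$. Applying the order-reversing operator $(0:_M -)$ gives $(0:_M Ann_R(aM)) \subseteq (0:_M Ann_R(N))$. Since $M$ is a comultiplication module, $N = (0:_M Ann_R(N))$, so we conclude $(0:_M Ann_R(aM)) \subseteq N$, which is precisely condition (d).

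I would then assemble these two observations into the proof: let $N$ be an arbitrary proper submodule of $M$; the displayed chain of inclusions together with $N=(0:_M Ann_R(N))$ shows (d) holds; hence by Theorem \ref{t1.6} (specifically $(d)\Rightarrow(a)$), $N$ is a quasi $z^\circ$-submodule of $M$. I do not anticipate any real obstacle: the only thing to be careful about is that Theorem \ref{t1.6} requires $M$ to be faithful reduced multiplication, which is part of the hypothesis of this corollary, and that the comultiplication property is being used in the precise form $N=(0:_M Ann_R(N))$ quoted in the definition just above the statement. No calculation beyond the one-line inclusion argument is needed.
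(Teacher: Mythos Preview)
Your proof is correct and takes essentially the same approach as the paper: verify condition (d) of Theorem \ref{t1.6} via the comultiplication hypothesis, then invoke $(d)\Rightarrow(a)$. The only cosmetic difference is that the paper applies the comultiplication identity to $aM$ itself (yielding $(0:_M Ann_R(aM))=aM\subseteq N$ immediately), whereas you apply it to $N$ and pass through the inclusion $Ann_R(N)\subseteq Ann_R(aM)$.
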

\begin{proof}
As $M$ is a comultiplication $R$-module, for each $a \in M$ we have $aM=(0:_MAnn_R(aM))$. Now the result follows from Theorem \ref{t1.5} $(d)\Rightarrow (a)$.
\end{proof}

\begin{thm}\label{t7.9}
Let $M$ be a faithful reduced multiplication $R$-module and $N$ be a quasi $z^\circ$-submodule of $M$. Then every
prime submodule, minimal over $N$ is a prime quasi $z^\circ$-submodule of $M$.
\end{thm}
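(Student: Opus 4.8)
The plan is to fix a prime submodule $P$ that is minimal over $N$ and show directly from Definition~\ref{d1.1} that $P$ is a quasi $z^\circ$-submodule; that it is prime is given. So I must verify that $\mathfrak{P}_{aM} \subseteq P$ for every $a \in (P:_R M)$. The natural strategy is to reduce the statement to a statement about $z^\circ$-ideals in the reduced ring $\bar R = R/Ann_R(M)$, using the machinery already set up: since $M$ is faithful here, $\bar R = R$, and by Lemma~\ref{t19.3} the hypothesis that $N$ is a quasi $z^\circ$-submodule gives that $(N:_R M)$ is a $z^\circ$-ideal of $R$. The key translation step is to relate prime submodules of $M$ minimal over $N$ with prime ideals of $R$ minimal over $(N:_R M)$: by \cite[Proposition 1.5]{MR2378535} (already invoked in the excerpt) the correspondence $P \mapsto (P:_R M)$ sends a minimal prime submodule over $N$ to a prime ideal of $R$, and in the faithful multiplication setting $P = (P:_R M)M$, so this correspondence is the expected order-isomorphism and carries ``minimal over $N$'' to ``minimal over $(N:_R M)$''.

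Granting that, the argument runs as follows. Let $\mathfrak{p} = (P:_R M)$, a prime ideal of $R$ minimal over the $z^\circ$-ideal $(N:_R M)$. It is a standard fact about $z^\circ$-ideals (the ``Examples of $z^\circ$-ideals'' cited from \cite{MR1736781}) that a prime ideal minimal over a $z^\circ$-ideal is itself a $z^\circ$-ideal; equivalently one shows $\mathfrak{P}_a \subseteq \mathfrak{p}$ for all $a \in \mathfrak{p}$ by the usual localization/contraction argument — if $a \in \mathfrak{p}$ then since $\mathfrak{p}$ is minimal over the $z^\circ$-ideal $(N:_R M)$ there is $b \notin \mathfrak{p}$ with $ab \in (N:_R M)$, whence $\mathfrak{P}_{ab} \subseteq (N:_R M) \subseteq \mathfrak{p}$, and $\mathfrak{P}_a \subseteq \mathfrak{P}_{ab} \cup V(b)$-type reasoning confined to $\mathfrak{p}$ forces $\mathfrak{P}_a \subseteq \mathfrak{p}$. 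Thus $\mathfrak{p}$ is a $z^\circ$-ideal of $R$. Now take $a \in (P:_R M) = \mathfrak{p}$. By Theorem~\ref{t119.3} (applicable since $M$ is faithful multiplication), $\mathfrak{P}_a M \subseteq \mathfrak{P}_{aM}$; I actually want the reverse containment at the level needed, so instead I use Corollary~\ref{c1.13}: $\mathfrak{P}_{aM} = (0:_M Ann_R(aM))$. Pulling back, $Ann_R(aM) \not\subseteq \mathfrak{p}$ would follow from Lemma~\ref{ll1.4} only if $P$ were minimal prime in $M$; in general I instead argue that $\mathfrak{P}_{aM} = \mathfrak{P}_a M$ when $M$ is faithful multiplication and then use $a \in \mathfrak{p}$, $\mathfrak{p}$ a $z^\circ$-ideal, to get $\mathfrak{P}_a \subseteq \mathfrak{p}$, hence $\mathfrak{P}_{aM} = \mathfrak{P}_a M \subseteq \mathfrak{p}M = (P:_R M)M = P$. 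This gives exactly $\mathfrak{P}_{aM} \subseteq P$ for all $a \in (P:_R M)$, i.e.\ $P$ is a quasi $z^\circ$-submodule, and it is prime, so it is a prime quasi $z^\circ$-submodule.

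The main obstacle is the passage $\mathfrak{P}_{aM} = \mathfrak{P}_a M$: Theorem~\ref{t119.3} only gives $\mathfrak{P}_a M \subseteq \mathfrak{P}_{aM}$ in general, with equality requiring $M$ finitely generated. I expect to circumvent this by working with $Ann_R(aM) = Ann_R(a)$ (true since $M$ is faithful) and the identity $(0:_M Ann_R(aM)) = \mathfrak{P}_{aM}$ from Corollary~\ref{c1.13}: it suffices to show $Ann_R(a) \not\subseteq \mathfrak{p}$ forces the prime $P$ to fail to contain $aM$, contradicting $a \in (P:_RM)$, so in fact $Ann_R(a) \subseteq$ something controlled by $\mathfrak{p}$ — more cleanly, the right move is to invoke Theorem~\ref{t1.6}, whose equivalent condition (f) for the quasi $z^\circ$-submodule $N$ is inherited: one shows that if $P$ is minimal prime over $N$ then for $a,b \in R$ with $Ann_R(aM) \subseteq Ann_R(bM)$ and $aM \subseteq P$, the minimality of $P$ over $N$ together with condition (f) for $N$ pushes $b$ into $(P:_R M)$, so $P$ satisfies (f) and hence (a) of Theorem~\ref{t1.6}, i.e.\ $P$ is a quasi $z^\circ$-submodule. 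The delicate point there is showing $aM \subseteq P$ with $P$ minimal over $N$ lets one replace $a$ by an element lying in $(N:_R M)$ up to a multiplier outside $(P:_RM)$ — this is precisely the ``minimal prime over'' localization trick, and it is the one place where minimality is essential.
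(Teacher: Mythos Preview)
Your final approach---verifying condition~(f) of Theorem~\ref{t1.6} for $P$ via the minimal-prime multiplier trick---is exactly the paper's proof. Concretely, the paper assumes $Ann_R(aM)\subseteq Ann_R(bM)$ with $a\in(P:_RM)$, applies Lemma~\ref{ll1.4} to $M/N$ (where $P/N$ is a minimal prime) to produce $c\notin(P:_RM)$ with $caM\subseteq N$, notes $Ann_R(caM)\subseteq Ann_R(cbM)$, invokes condition~(f) for $N$ to get $cbM\subseteq N\subseteq P$, and uses primeness of $P$ to conclude $b\in(P:_RM)$.

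Your earlier route through Lemma~\ref{t19.3} and the identity $\mathfrak{P}_{aM}=\mathfrak{P}_aM$ indeed requires $M$ finitely generated (Theorem~\ref{t119.3}), as you yourself diagnose, so that paragraph should be dropped: it does not lead to a proof under the stated hypotheses. The one place where your sketch is vaguer than the paper is the source of the multiplier $c$; rather than a generic ``localization trick'' (which in general only yields $c a^n\in(N:_RM)$ for some $n$), the paper obtains $c$ with $ca\in(N:_RM)$ directly from Lemma~\ref{ll1.4} applied to $M/N$, and you should make that step explicit.
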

\begin{proof}
Let $P$ be a prime submodule, minimal over $N$. Assume that $Ann(aM) \subseteq Ann(bM)$, where
$a\in (P:_RM)$ and $b \in R$. Since $P/N$ is a minimal prime submodule of $M/N$,
By Lemma \ref{ll1.4} (b), there exists $c \in Ann_R(a(M/N)) \setminus (P/N:_RM/N)$. Thus $ca \in (N:_RM)$ and $c \not \in (P:_RM)$. Now we have  $Ann(caM) \subseteq Ann(cbM)$. As $N$ is a quasi $z^\circ$-submodule of $M$, we get that
$cb \in (N:_RM)\subseteq (P:_RM)$. As $c \not \in (P:_RM)$ and $P$ is a prime submodule, $b \in (P:_RM)$, as needed.
\end{proof}

\begin{cor}\label{c7.10}
If $f: M\rightarrow M/N$ is the natural epimorphism, where $M$ is
a faithful reduced multiplication $R$-module and $N$ is a quasi $z^\circ$-submodule of $M$, then every quasi $z^\circ$-submodule of $M/N$ contracts to
a quasi $z^\circ$-submodule of $M$.
\end{cor}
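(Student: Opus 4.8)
The plan is to read ``contracts'' in the usual sense: if $\bar{L}$ is a quasi $z^\circ$-submodule of $M/N$, I want to show that $L:=f^{-1}(\bar{L})$ is a quasi $z^\circ$-submodule of $M$. Note that $N\subseteq L$ and $L/N=\bar L$, so $L$ is proper because $\bar L$ is. First I would set up two elementary correspondence facts. Since $N\subseteq L$, a direct check gives $(L:_RM)=(\bar L:_R M/N)$, and for $a\in R$ and any submodule $P$ of $M$ with $N\subseteq P$ one has $aM\subseteq P$ iff $a(M/N)\subseteq P/N$. Second, passing to $M/N$ sends prime submodules of $M$ containing $N$ bijectively and order-preservingly to prime submodules of $M/N$, hence it sends the prime submodules of $M$ \emph{minimal over} $N$ bijectively to the minimal prime submodules of $M/N$. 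Combining these, for each $a\in R$ I get
\[
f^{-1}\!\left(\mathfrak{P}_{a(M/N)}\right)=\bigcap\{\,P : P\text{ a prime submodule of }M\text{ minimal over }N,\ aM\subseteq P\,\},
\]
with the usual convention that an empty intersection equals the whole module.

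Next I would fix $a\in (L:_RM)$ and aim to prove $\mathfrak{P}_{aM}\subseteq L$. Because $(L:_RM)=(\bar L:_R M/N)$ and $\bar L$ is a quasi $z^\circ$-submodule of $M/N$, we have $\mathfrak{P}_{a(M/N)}\subseteq \bar L$, and applying $f^{-1}$ gives $f^{-1}(\mathfrak{P}_{a(M/N)})\subseteq L$. Moreover, Remark \ref{r1.1} applied to $\bar L$ gives $\mathfrak{P}_{a(M/N)}\neq M/N$, so the index set in the displayed formula is nonempty; in particular there is at least one prime submodule $P$ of $M$ minimal over $N$ with $aM\subseteq P$.

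The decisive step is then Theorem \ref{t7.9}: since $M$ is a faithful reduced multiplication $R$-module and $N$ is a quasi $z^\circ$-submodule, every prime submodule of $M$ minimal over $N$ is a (prime) quasi $z^\circ$-submodule of $M$. Hence, for each $P$ occurring in the displayed intersection, $aM\subseteq P$ means $a\in(P:_RM)$, so $\mathfrak{P}_{aM}\subseteq P$. Intersecting over all such $P$ yields $\mathfrak{P}_{aM}\subseteq f^{-1}(\mathfrak{P}_{a(M/N)})\subseteq L$, which is exactly the defining condition for $L$ to be a quasi $z^\circ$-submodule of $M$.

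I expect the only delicate point to be the first correspondence fact, in particular handling the ``$\mathfrak{P}_{(-)}=M$'' convention in the degenerate case where $a(M/N)$ lies in no minimal prime submodule of $M/N$; this is precisely what invoking Remark \ref{r1.1} for $\bar L$ rules out for the relevant $a$. Everything else is a formal chase through Theorem \ref{t7.9} and the quasi $z^\circ$ hypothesis on $\bar L$.
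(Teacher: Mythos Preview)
Your proposal is correct and follows exactly the approach the paper intends: the corollary is stated without proof immediately after Theorem~\ref{t7.9}, and your argument is precisely the natural deduction from that theorem via the prime correspondence $P\leftrightarrow P/N$ between primes of $M$ minimal over $N$ and minimal primes of $M/N$. Your care with Remark~\ref{r1.1} to rule out the degenerate empty-intersection case is the only point the paper leaves entirely to the reader.
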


\begin{cor}\label{c7.12}
If $M$ is a faithful reduced multiplication $R$-module, then every maximal quasi $z^\circ$-submodule is a
prime  quasi $z^\circ$-submodule.
\end{cor}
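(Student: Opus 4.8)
The plan is to deduce the statement from Theorem \ref{t7.9} via a Zorn's lemma argument. Let $N$ be a maximal quasi $z^\circ$-submodule of $M$. Being a quasi $z^\circ$-submodule, $N$ is proper, and since $M$ is a multiplication module we have $N=(N:_RM)M$ with $(N:_RM)$ a proper ideal of $R$; choose a maximal ideal $\mathfrak{m}$ of $R$ containing $(N:_RM)$. Because $M$ is a faithful multiplication module, $\mathfrak{m}M\neq M$ (see \cite{MR932633}), so $(\mathfrak{m}M:_RM)$ is a proper ideal containing the maximal ideal $\mathfrak{m}$, hence equals $\mathfrak{m}$; as $(\mathfrak{m}M:_RM)$ is prime and $M$ is multiplication, $\mathfrak{m}M$ is a prime submodule of $M$. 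Thus $N\subseteq\mathfrak{m}M\subsetneq M$, so the collection of prime submodules of $M$ containing $N$ is nonempty.

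Order this collection by reverse inclusion. Any chain has the intersection of its members as an upper bound, and an intersection of a chain of prime submodules is again prime; so by Zorn's lemma there is a prime submodule $P$ of $M$ that is minimal over $N$.

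Now Theorem \ref{t7.9} applies: since $M$ is a faithful reduced multiplication module and $N$ is a quasi $z^\circ$-submodule of $M$, the prime submodule $P$, being minimal over $N$, is a prime quasi $z^\circ$-submodule of $M$. In particular $P$ is a proper quasi $z^\circ$-submodule of $M$ with $N\subseteq P$, and maximality of $N$ forces $N=P$. Hence $N$ is a prime submodule which is a quasi $z^\circ$-submodule, i.e.\ a prime quasi $z^\circ$-submodule, as desired. The main obstacle is the very first step --- guaranteeing that a prime submodule minimal over $N$ exists --- which is where faithfulness and the multiplication hypothesis enter (beyond their role inside Theorem \ref{t7.9}); once $N$ is known to sit inside some prime submodule the rest is the routine Zorn argument together with a direct appeal to Theorem \ref{t7.9}. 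An alternative, closer to the theory of $z^\circ$-ideals, would be to imitate the proof that a maximal $z^\circ$-ideal of a reduced ring is prime: $R$ is reduced (as $M$ is faithful reduced multiplication), $(N:_RM)$ is a proper $z^\circ$-ideal by Lemma \ref{t19.3}, and from a hypothetical failure of primeness one would produce, using Theorem \ref{c19.3}, a quasi $z^\circ$-submodule strictly between $N$ and $M$; but pushing this through the submodule--ideal dictionary is fussier than the route above.
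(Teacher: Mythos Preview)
Your overall strategy---pick a prime submodule $P$ minimal over $N$, invoke Theorem~\ref{t7.9} to see that $P$ is a quasi $z^\circ$-submodule, and conclude $N=P$ by maximality---is exactly the deduction the paper intends (the corollary is stated without proof immediately after Theorem~\ref{t7.9}). So the approach is right.

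There is, however, a genuine slip in your first paragraph. The claim that $\mathfrak{m}M\neq M$ for \emph{every} maximal ideal $\mathfrak{m}$ of $R$ when $M$ is a faithful multiplication module is false without a finiteness hypothesis. For instance, take $R=\prod_{n\ge 1}\mathbb{F}_2$ and $M=\bigoplus_{n\ge 1}\mathbb{F}_2$ viewed as an ideal of $R$; one checks that $M$ is a faithful reduced multiplication $R$-module, yet $M=M^2\subseteq \mathfrak{m}M$ for any maximal ideal $\mathfrak{m}$ containing the proper ideal $M$, so $\mathfrak{m}M=M$. Thus the particular $\mathfrak{m}$ you choose above need not give a proper submodule $\mathfrak{m}M$. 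The fix is immediate: instead of constructing a prime submodule by hand, quote \cite[Theorem~2.5]{MR932633}, which says that every proper submodule of a nonzero multiplication module is contained in a maximal submodule; maximal submodules are prime, so the set of prime submodules containing $N$ is nonempty and your Zorn argument goes through unchanged. With that correction the proof is complete and coincides with the paper's intended one.
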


\begin{cor}\label{c7.13}
Let $M$ be a faithful reduced multiplication $R$-module and $P$ be a prime submodule of $M$. Then either $P$ is a quasi $z^\circ$-submodule or contains a maximal  quasi $z^\circ$-submodule which is a prime
 quasi $z^\circ$-submodule.
\end{cor}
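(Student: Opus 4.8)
The plan is to dispose of the trivial alternative and then extract the required submodule by a Zorn's lemma argument, reading off primality from Theorem~\ref{t7.9}. If $P$ is itself a quasi $z^\circ$-submodule we are done. Otherwise, let $\mathcal{F}$ be the set of all quasi $z^\circ$-submodules of $M$ contained in $P$, partially ordered by inclusion. First, $\mathcal{F}\neq\emptyset$: since $M$ is a nonzero reduced module, $Min(M)\neq\emptyset$, and as every prime submodule contains a minimal prime submodule, $\mathfrak{P}_0=\bigcap Min(M)$ is contained in the intersection of all prime submodules of $M$, which is $0$; thus $\mathfrak{P}_0=0\neq M$, so by Remark~\ref{r1.1} the zero submodule is a quasi $z^\circ$-submodule contained in $P$.

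To run Zorn's lemma I must check that a chain $\{N_i\}$ in $\mathcal{F}$ has an upper bound in $\mathcal{F}$; the natural candidate is $N:=\bigcup_i N_i$, which is a submodule contained in $P$ (hence proper), and the real point is that it is again a quasi $z^\circ$-submodule. Here I would first pass to $R/Ann_R(M)$ (legitimate by Proposition~\ref{p1000.1}) to assume $M$ faithful, use Lemma~\ref{t19.3} so that each $I_i:=(N_i:_RM)$ is a $z^\circ$-ideal of $R$, note that the $I_i$ form a chain whose union $I$ is again a $z^\circ$-ideal of $R$ (membership in the union of a chain of ideals is finitary), and observe that $N=\bigcup_i I_iM=IM$ because $M$ is a multiplication module. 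To conclude that $N=IM$ is a quasi $z^\circ$-submodule I would verify condition~(f) of Theorem~\ref{t1.6}: given $a,b\in R$ with $Ann_R(aM)\subseteq Ann_R(bM)$ and $aM\subseteq N$, take $x\in\mathfrak{P}_{aM}=(0:_MAnn_R(aM))$ (Corollary~\ref{c1.13}), write $Rx=JM$ ($M$ multiplication), deduce $Ann_R(aM)\subseteq Ann_R(cM)$ for each $c\in J$, and push everything into a \emph{single} $N_i$ to get $Rx\subseteq N$, hence $\mathfrak{P}_{aM}\subseteq N$. Turning $aM\subseteq\bigcup_i N_i$ into $aM\subseteq N_i$ for one index $i$ is the step I expect to be the genuine obstacle: it is automatic when $M$ is finitely generated (which the neighbouring results frequently use tacitly), and otherwise it should be recovered at the level of the chain $\{I_i\}$ of $z^\circ$-ideals rather than of the submodules themselves.

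Granting this, Zorn's lemma yields a maximal $N\in\mathcal{F}$. For primality, apply Zorn's lemma under reverse inclusion to the nonempty family of prime submodules $Q$ with $N\subseteq Q\subseteq P$ (a descending chain of prime submodules has prime intersection), obtaining a minimal such $Q_0$; a prime submodule strictly between $N$ and $Q_0$ would again lie in $P$, so $Q_0$ is minimal over $N$ among all prime submodules of $M$, hence a prime quasi $z^\circ$-submodule by Theorem~\ref{t7.9}. Since $N\subseteq Q_0\subseteq P$ with $Q_0\in\mathcal{F}$, maximality of $N$ forces $N=Q_0$, so $N$ is a prime quasi $z^\circ$-submodule of $M$ contained in $P$ and maximal in $\mathcal{F}$; together with Corollary~\ref{c7.12} this yields the stated conclusion. (If ``maximal quasi $z^\circ$-submodule'' is intended absolutely rather than relative to $P$, one must moreover exclude a quasi $z^\circ$-submodule $L$ properly containing $N$ with $L\not\subseteq P$; I would attack this via the prime ideal $(N:_RM)$ against the $z^\circ$-ideal $(L:_RM)$, and regard it as the one further point needing care.)
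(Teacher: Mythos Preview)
The paper supplies no proof for Corollary~\ref{c7.13}; it is simply listed after Theorem~\ref{t7.9} and Corollaries~\ref{c7.10}--\ref{c7.12} as an immediate consequence, exactly parallel to the classical statement for $z^\circ$-ideals in reduced rings. Your Zorn's-lemma argument, with primality read off by passing to a prime $Q_0$ minimal over $N$ inside $P$ and invoking Theorem~\ref{t7.9}, is precisely the intended derivation, and your verification that such a $Q_0$ is in fact minimal over $N$ among \emph{all} primes of $M$ is clean.

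You are right to single out the chain step as the one genuine subtlety: for a chain $\{N_i\}$ of quasi $z^\circ$-submodules and $a\in(\bigcup N_i:_RM)$, one does not automatically get $aM\subseteq N_i$ for a single $i$. Your remark that this is immediate when $M$ is finitely generated is correct, and in that case the whole argument goes through via Lemma~\ref{t19.3} and Theorem~\ref{c19.3}(a). For general faithful multiplication $M$ the paper provides no sharper tool (the converse in Lemma~\ref{t19.3} and part~(a) of Theorem~\ref{c19.3} both explicitly assume finite generation), so your caveat is at exactly the level of rigor of the surrounding text; the paper implicitly relies on the same passage. Your suggestion to work at the level of the chain $\{(N_i:_RM)\}$ of $z^\circ$-ideals is the right instinct, but note that translating the conclusion back to submodules again uses the finitely-generated direction of Lemma~\ref{t19.3}, so this does not by itself close the gap in full generality.

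On your final parenthetical: in this sequence of corollaries ``maximal quasi $z^\circ$-submodule'' is to be read as maximal among those contained in $P$ (this is what Corollary~\ref{c7.12} and Theorem~\ref{t7.9} together yield), which is exactly what your construction delivers; no further argument about an absolute maximum is needed.
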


\begin{thm}\label{t4.8}
Let $M$ be a faithful multiplication $R$-module. Then the following are equivalent:
\begin{itemize}
\item [(a)] $M$ is a reduced module, i.e., $R$ is a reduced ring;
\item [(b)] The submodule $0$ is a quasi $z^\circ$-submodule of $M$.
\end{itemize}
\end{thm}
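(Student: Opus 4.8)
The plan is to observe that, for a faithful module, deciding whether the zero submodule is a quasi $z^\circ$-submodule reduces to a single condition on $\mathfrak{P}_0$, and then to recognize both (a) and (b) as reformulations of it. Since $M$ is faithful, $(0:_RM)=Ann_R(M)=0$, so by Definition \ref{d1.1} the submodule $0$ is a quasi $z^\circ$-submodule of $M$ if and only if $\mathfrak{P}_{0M}\subseteq 0$ for the unique element $a=0$ of $(0:_RM)$, i.e. if and only if $\mathfrak{P}_0=0$, where $\mathfrak{P}_0=\cap\{P:P\in Min(M)\}$ (we may assume $M\ne 0$, so $0\ne M$). Moreover, because $Ann_R(M)=0$, the characterization of reduced multiplication modules from \cite{MR2839935} (used already in the proof of Lemma \ref{ll1.4}) shows that statement (a), ``$R$ is a reduced ring'', is precisely the statement that $M$ is a reduced module, i.e. $rad_0M=0$.

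For $(a)\Rightarrow(b)$, I would invoke Corollary \ref{c1.13}: assuming $M$ is reduced, $M$ is a faithful reduced multiplication $R$-module, so applying Corollary \ref{c1.13} with $a=0$ gives $\mathfrak{P}_0=\mathfrak{P}_{0M}=(0:_MAnn_R(0M))=(0:_MR)=0$, since $Ann_R(0M)=Ann_R(0)=R$ and $1\in R$. Hence $\mathfrak{P}_0=0\ne M$, and by Remark \ref{r1.1} (or directly from Definition \ref{d1.1}, as reduced above) the submodule $0$ is a quasi $z^\circ$-submodule of $M$. Alternatively one may quote Theorem \ref{t1.6} $(d)\Rightarrow(a)$ for $N=0$, where condition (d) holds trivially: $aM\subseteq 0$ forces $a\in Ann_R(M)=0$, whence $(0:_MAnn_R(aM))=(0:_MR)=0\subseteq 0$.

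For $(b)\Rightarrow(a)$, if $0$ is a quasi $z^\circ$-submodule of $M$ then, by the reduction above, $\mathfrak{P}_0=0$; in particular $Min(M)\ne\emptyset$. Since every minimal prime submodule of $M$ is a prime submodule, $rad_0M=\cap\{P:P\ \text{prime in}\ M\}\subseteq\cap\{P:P\in Min(M)\}=\mathfrak{P}_0=0$, so $rad_0M=0$; that is, $M$ is reduced, equivalently $R$ is a reduced ring.

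The only step that is not mere bookkeeping is the identity $\mathfrak{P}_0=0$ needed in $(a)\Rightarrow(b)$: reducedness of $M$ only yields $rad_0M=0$, and a priori the intersection of the minimal prime submodules could be strictly larger than $rad_0M$. This gap is exactly what Corollary \ref{c1.13} (resting on Theorem \ref{t1.5}(a) and \cite[Theorem 2.11]{F402}) closes; an alternative is to note that, under the correspondence between prime submodules of the multiplication module $M$ and prime ideals of $R$ containing $Ann_R(M)=0$, every prime submodule of $M$ lies above a minimal prime submodule, which forces $rad_0M=\mathfrak{P}_0$.
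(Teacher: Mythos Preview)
Your proof is correct. For $(a)\Rightarrow(b)$ your alternative via Theorem~\ref{t1.6} $(d)\Rightarrow(a)$ is exactly the paper's argument, and your primary route through Corollary~\ref{c1.13} is an equally valid shortcut to the same conclusion $\mathfrak{P}_0=0$.

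For $(b)\Rightarrow(a)$ you take a genuinely different path from the paper. You reduce (b) to the single condition $\mathfrak{P}_0=0$ and then observe $rad_0M\subseteq\mathfrak{P}_0=0$, so $M$ is reduced by definition. The paper instead argues element-wise on the ring side: given $a\in R$ with $a^2=0$, it notes that $\mathfrak{P}_{aM}=\mathfrak{P}_{a^2M}=\mathfrak{P}_0$ (the first equality holding because $(P:_RM)$ is prime for every $P\in Min(M)$), and since (b) forces $\mathfrak{P}_0=0$ this gives $aM\subseteq\mathfrak{P}_{aM}=0$, hence $a=0$ by faithfulness. Your argument is a bit cleaner, using the module-theoretic definition of ``reduced'' directly and avoiding the auxiliary identity $\mathfrak{P}_{aM}=\mathfrak{P}_{a^2M}$; the paper's argument has the minor advantage of yielding ``$R$ is reduced'' without passing through the equivalence in \cite{MR2839935}.
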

\begin{proof}
$(a)\Rightarrow (b)$
Let $x=0$. Then $(0:_MAnn_R(x))=0$. Thus the result follows from Theorem \ref{t1.6} $(d)\Rightarrow (a)$.

$(b)\Rightarrow (a)$
Let $a \in R$ such that $(Ra)^2=0$. It is clear that $\mathfrak{P}_{aM}=\mathfrak{P}_{(a^2M)}$. Thus
$\mathfrak{P}_{aM}=\mathfrak{P}_{a^2M}=\mathfrak{P}_{0}$. Since the submodule $0$ is a $z^\circ$-submodule, $aM=0$. Now as $M$ is faithful, $a=0$.
\end{proof}

\end{document}